\numberwithin{equation}{section}
\DeclareMathAlphabet{\pazocal}{OMS}{zplm}{m}{n}
\def\eps{\varepsilon }
\def\eps{\varepsilon}
\newcommand\br{\begin{remark}}
\newcommand\er{\end{remark}}
\newcommand\bp{\begin{pmatrix}}
\newcommand\ep{\end{pmatrix}}
\newcommand{\be}{\begin{equation}}
\newcommand{\ee}{\end{equation}}
\newcommand\ba{\begin{equation}\begin{aligned}}
\newcommand\ea{\end{aligned}\end{equation}}
\newcommand{\bap}{\begin{app}}
\newcommand{\eap}{\end{app}}
\newcommand{\begs}{\begin{exams}}
\newcommand{\eegs}{\end{exams}}
\newcommand{\beg}{\begin{example}}
\newcommand{\eeg}{\end{exaplem}}
\newcommand{\bpr}{\begin{proposition}}
\newcommand{\epr}{\end{proposition}}
\newcommand{\bt}{\begin{theorem}}
\newcommand{\et}{\end{theorem}}
\newcommand{\bc}{\begin{corollary}}
\newcommand{\ec}{\end{corollary}}
\newcommand{\bl}{\begin{lemma}}
\newcommand{\el}{\end{lemma}}
\newcommand{\bd}{\begin{definition}}
\newcommand{\ed}{\end{definition}}
\newcommand{\brs}{\begin{remarks}}
\newcommand{\ers}{\end{remarks}}
\newcommand{\const}{\text{\rm constant}}
\newtheorem{theorem}{Theorem}[section]
\newtheorem{proposition}[theorem]{Proposition}
\newtheorem{corollary}[theorem]{Corollary}
\newtheorem{lemma}[theorem]{Lemma}
\theoremstyle{remark}
\newtheorem{remark}[theorem]{Remark}
\theoremstyle{definition}
\newtheorem{definition}[theorem]{Definition}
\newtheorem{example}[theorem]{Example}
\newcommand{\beq}{\begin{equation}}
\newcommand{\eeq}{\end{equation}}
\title{ Continuous guts poker and numerical optimization of generalized recursive games }
\author{Kevin Buck}
\address{Indiana University, Bloomington, IN 47405}
\email{kevbuck@iu.edu}
\thanks{Research of K.B. was partially supported under NSF grant no. DMS-0300487}
\author{Jae Hwan Lee}
\address{U.C. Berkeley, Berkely, CA 94720 }
\email{jaehwanlee@berkeley.edu}
\thanks{Research of J.L. was partially supported NSF grant no. DMS-2051032 (REU).}
\author{Jacob Platnick}
\address{Northwestern University, Evanston, IL 60208}
\email{jacobplatnick2024@u.northwestern.edu}
\thanks{Research of J.P. was partially supported under NSF grant no. DMS-2051032 (REU).}
\author{Aric Wheeler}
\address{Indiana University, Bloomington, IN 47405}
\email{awheele@iu.edu}
\thanks{Research of A.W. was partially supported under 
NSF grant no. DMS-0300487}
\author{Kevin Zumbrun}
\address{Indiana University, Bloomington, IN 47405}
\email{kzumbrun@iu.edu}
\thanks{Research of K.Z. was partially supported under NSF grant no. DMS-0300487}
\begin{document}

\begin{abstract}
We study a type of generalized recursive game introduced by Castronova, Chen, and Zumbrun
featuring increasing stakes, with an emphasis on continuous guts poker and $1$ v. $n$ coalitions.
Our main results are to develop practical numerical algorithms with rigorous underlying theory
for the approximation of optimal multiplayer strategies, and to use these to obtain a number
of interesting observations about guts.
Outcomes are a striking 2-strategy optimum for $n$-player coalitions,
with asymptotic advantage approximately $16\%$;
convergence of Fictitious Play to symmetric Nash equilibrium;
and a malevolent interactive $n$-player ``bot'' for demonstration.
For a mild variant of Guts known as the ``Weenie rule'' we find the surprising different result that the Nash 
equlibrium solution is in fact strong; that is, it is optimal against arbitrary coalitions.
\end{abstract}

\date{\today}
\maketitle
\tableofcontents

%PLAN: FP? etc?  look at talk maybe? no, it was not done from scratch and not good ordering for this... hmmm.
%featuring increasing stakes, with an emphasis on the motivating example of continuous guts poker

\section{Introduction}\label{s:intro}
In this paper, we study a class of single-state generalized recursive games introduced in \cite{CCZ}, 
typified by the popular poker game ``Guts,''
with an eye toward developing a practical set of theoretical and numerical tools for optimizing multiplayer
strategy.

\medskip

{\bf Generalized recursive games.}
{Recursive games}, introduced by Everett \cite{E}, are games like Markov chains consisting of a set
of possible states, for which the outcome of
a single round is to either terminate with a given fixed payoff or to proceed to another state, with various (fixed)
probabilities.
This is closely related to the concept of {stochastic game} introduced by Shapley \cite{Sh1},
in which players move from state to state with various probabilities, receiving payoffs at the same time without
termination. Indeed, if $\rho_0$ is the probability of termination in a one-shot recursive game,
$a_0$ is the associated payoff, and $\rho, \dots, \rho_\ell$ are the probabilities of moving to states 
$1,\dots, \ell$, then this is equivalent to a {\it variable-stakes} stochastic game with transition probabilities
$\rho_1, \dots, \rho_\ell$, one-shot payoff $\alpha:= \rho_0 a_0$, and new stakes adjusted by
multiplication factor $\beta:=(1-\rho_0)$.  That is, the recursive game of Everett may be viewed
as a variable-stakes version of the stochastic game of Shapley, where the stakes are multiplied by a factor
\be\label{betaless}
0\leq \beta\leq 1.
\ee

The class of games introduced in \cite{CCZ}, denoted here as {\it generalized recursive games} (gRG) 
have the variable-stake stochastic structure just described, but with the condition \eqref{betaless}
relaxed to
\be\label{betagtr}
0\leq \beta.
\ee
That is, in its most general form, 
in each round a game in state $i$ moves to a new state $j$ with transition probability $\rho_{ij}$, 
receiving a one-shot payoff $\alpha_{ij}$ and adjusting the stakes by factor $\beta_{ij}\geq 0$,
where $i, j=1,\dots, \ell$ enumerate the possible states, and $\rho_{ij}$, $\alpha_{ij}$, $\beta_{ij}$
are functions of the strategies chosen by the players in the game.

Here, as in \cite{CCZ}, we will restrict to the simplest case of a {\it single state}, 
so that there is a single one-shot payoff $\alpha$ and a single stakes multiplier $\beta$
that is nonnegative but not necessarily bounded by one.
We will assume further that the game is {\it zero sum} in the sense that the one-shot game described by
$\alpha$ is zero sum.
This does not necessarily mean that total payoff over the infinite course of the game is zero sum,
as increasing stakes allow the possibility that some part of the stakes remain ``indefinitely in play''
and inaccessible to all players, serving as an effective mutual loss.
For the same reason, von Neumann's maximin principle does not necessarily hold in the long run even in the 
two-player case, though it of course holds for the one-shot payoff function $\alpha$.

\medskip

{\bf Discrete and continuous guts poker.}
An archetypal example, and our particular target here is the popular poker game ``Guts'' \cite{S,W1}
which can be played with any number of players $n \geq 2$ and $2-$, $3-$, or $5$-card hands with standard
poker ordering.
Players make an initial one unit ante into a pot. Hands are dealt, and on the count of three players
either ``hold'' or ``drop'' their hands, with no further betting or cards dealt.
If only one player holds, they win the pot and the round is terminated.  If no players hold, the game is
redealt, starting over.  If $m\geq 2$ players hold, the player with highest hand wins the pot and the 
remaining $m-1$ players must match it, so that the stakes increase by factor $m-1$. A new hand
is then dealt to all players and the game played in the same way but with now higher stakes, this process
continuing until the round is terminated.
In actual poker there are many identical rounds of play; here, we view each round as an individual game.

Following \cite{CCZ}, we will treat a simplified, continuous version of guts, in which hands are replaced
by random variables $p_i$ uniformly distributed on $[0,1]$, with higher value corresponding to a higher hand.
This avoids combinatorial details coming from lack of replacement, while keeping the essential features 
of the game.
We then discretize the values on $[0,1]$ to obtain an arbitrarily-near
game suitable for numerical analysis, returning full circle to a (slightly simpler) finite game.

A ``pure'' strategy for player $i$, indexed by $p_i^*\in [0,1]$, is the threshold type strategy
to hold for $p_i> p_i^*$ and otherwise drop.
As noted in \cite{CCZ}, one might also consider more general type pure strategies to hold for $p_i$
in a specified subset $\mathcal{S}_i\subset [0,1]$; however, these are evidently majorized by the 
threshold strategies $p_i^*=|\mathcal{S}_i|$, and so may be ignored.
A ``mixed,'' or ``blended'' strategy is a random mixture of pure strategies with a given probability weight.

\medskip

{\bf Coalitions, and value of multiplayer games.}
For a symmetric multi-player game such as the one-shot (payoff $\alpha$) game associated with guts, 
there always exists a symmetric Nash Equilibrium 
\cite{N,W2}, meaning a collection of identical strategies from which deviation by a single player
does not improve their payoff hence there is no motivation to leave. The value of this symmetric
Nash equilibrium is necessarily zero, by the zero-sum assumption.

If the Nash equilibrium is {\it strong}, meaning that deviation of any subset of players does not improve
their joint payoff, then this is in fact the value of the game by any reasonable definition.
However, typically this is not the case, and indeed it is not the case for one-shot continuous guts \cite{CCZ}.
In the latter scenario, following von Neumann and Morganstern \cite{vNM},
it is standard to consider the effects of coalitions in assigning a value.
Taking the worst-case point of view, we will define the value $\underline{v}$ for player one
to be the von Neumann minimax value of the two-player game obtained by considering players $2$-$n$ as a 
single entity consisting of a coalition working together.
One might call this a ``synchronous'' coalition, in that players $2$-$n$ may form mixed strategies 
blending coordinated or ``synchronized'' configurations of pure strategies.
It is straightforward to see that this value is the maximum one forceable by player one.
By the minimax principle, it is equal to the value forceable via synchronous coalition by players $2$-$n$.

An interesting question that we do not address here is what is the minimax value forceable by
players $2$-$n$ via {\it asynchronous} coalition, that is,
$$
\min_{p_2,\dots,p_n} \max_{p_1} \psi(p_1, p_2, \dots, p_n),
$$
where $\psi$ denotes the expected payoff for blended strategies with probability distributions $p_j$,
with $j=1,\dots, n$.
This corresponds to the case that players $2$-$n$ are able to organize a strategy together, but not
allowed to communicate during the game, or even to know to which round of play they are responding at a given time.
It is clear that the asynchronous minimax is greater than or equal to the maximin, or synchronous coalition value,
and strictly less than the Nash equilibrium value, except in the case the the Nash equilibrium
is strong, in which case all three values agree. 

If the Nash equilibrium is not strong, then (see
\cite[Proposition A.2]{CCZ}) the optimum synchronous coalition strategy contains strategies 
for which players $2-n$'s individual strategies do not agree.
If this optimum is {\it strict} up to symmetry, and has no pure strategy representative,
then every asynchronous mixed strategy for players $2$-$n$ gives a larger return to player $1$,
and so we may deduce that there is a gap between the minimax values forceable by players $2$-$n$
by synchrous vs. asynchronous coalition play. 
Thus, a gap between synchronous vs. asynchronous minimax values would seem to be the generic situation; 
based on our numerical experiments, it indeed appears to be the case for continuous Guts
(by examination of optimal synchronous strategies found below).

The maximin against asynchronous play on the other hand is the same as the maximin against synchronous play.
In the above-described situation that there is a gap between synchronous and asynchronous minimax values, 
therefore, there is a gap also between minimax and maximin values for asynchronous coalition play.
In this situation, for which the minimax is strictly greater than the maximin, 
one could imagine a negotiation in which both sides agree to some intermediate return, 
even though neither may force such an outcome.

\medskip

{\bf The method of Fictitious Play.}
Having framed the one-shot problem as a standard finite two-player zero-sum game,
we now discuss practical solution of such a game.
Dantzig \cite{D} demonstrated an equivalence between such games and the class of linear programming problems.
Hence, one approach would be to translate to a linear programming problem and apply the Simplex method or
one of the various interior point methods that has been developed in recent years.
Alternatively observing that the minimax problem is a convex minimization program,
one may solve by subgradient descent or related methods.

An appealing alternative of interest in its own right, involving concepts more directly
involved with the game, is the iterative {\it method of Fictitious Play} (FP)
introduced by Brown \cite{B} as a model for learning/dynamics of actual play.
In this algorithm, the game is repeated a large number of times, with players at each step
following a strategy that is the best response to the empirical probability distribution defined
by past play of the opponent player.
As shown in a remarkable paper of Robinson \cite{R}, (FP) converges for finite two-player zero-sum games
to the associated Nash equilibrium, in this case equal to the von Neumann minimax.
For multiplayer or nonzero-sum games, it may not converge, as shown by a clever counterexample of
Shapley \cite{Sh2,Sh3} for a class of nonzero-sum 3x3 two-player games, hence, by introduction
of a virtual third player with no choice in strategy, also for a class of zero-sum 3x3x1 three-player games.

This algorithm is conveniently supported for finite two-player games in the Python package Nashpy \cite{NPy}, 
requiring only input of the payoff matrix.
We will use it as the basis of our numerical methods, yielding approximate solutions of the one-shot game
associated with a (gRG).

\medskip

{\bf Fixed-point iteration: from one-shot game to long-term value.}
To assign a value to the full, possibly infinite game, we define following \cite{CCZ} a {\it termination fee}
\be\label{t}
-t_1,
\ee
typically $\leq 0$ for player one to leave the game, and similarly for player two.
For example, in the context of Guts Poker, $-t_1=-1$ corresponds to forfeiting one's ante, while
$-t_1=0$ corresponds to the convention that players are allowed at any time to collect their share of the
pot and leave play without penalty. Here, as in \cite{CCZ}, we take always the former case \emph{$-t_1=-1$}.

Evidently, the value $V_0$ forceable by player one in zero rounds of play is thus $V_0=-t_1$.
Denoting by $A$ and $B$ the matrices associated with the one-shot payoff $\alpha$ and stakes multiplier $\beta$,
so that $\alpha(x,y)=xAy^T$, $\beta(x,y)=xBy^T$, we thus have that the value forceable by player one
in precisely one round of play is
$ V_1=Value(A + BV_0), $
where $Value$ of a matrix denotes the standard minimax value.
Continuing, we find that the value $V_n$ forceable in $n$ round of play is given inductively by
\be\label{Vn}
 V_{n+1}=Value(A + BV_n).
 \ee

 By the assumption $B_{ij}\geq 0$, a consequence of $\beta\geq 0$, the sequence $V_n$ is monotone increasing
 so long as $v_1>V_0$, hence has a well-defined limit 
 \be\label{underV}
 \underline{V}:=\lim_{n\to \infty} V_n.
 \ee
 As noted in \cite{CCZ}, $\underline{V}$ is necessarily a {\it fixed point} of the value map
 \be\label{value}
 T(V):= Value(A+BV),
 \ee
 which could be $V=+\infty$.

 %TODO, ALT def more general:  \underline{V}:=\inf_{n\geq 0} V_n.
 %rmk that this allows in principle also for the case that $\beta$ may have mixed signs; however
 %the fixed point property is lost.... NO, but save as NOTE here in file-KZ

 For $V_1>V_0$, we define the value forceable by player one to be the minimum of $\underline{V}$ and $t_2$, 
 the amount forfeited by player two to terminate the game without play.
 If $V_1\leq V_0$ on the other hand, then the value forceable by player one is $\underline{V}=V_0=-t$, and
 player one should not enter into play.
Likewise, the infimum $\overline V$ of expected returns that can be forced by player two is a fixed point of $T$.
Note that both or either could be $\pm \infty$ in general.  When $\underline V= \overline V=V$, 
with $-t_1\leq V\leq t_2$, we say
that the game {\it has value $V$}, similarly as in the nonrecursive case.

\subsection{Main results}\label{results}\label{s:results}
We now describe briefly our main results.

%TODO: below, mention that it is about finite games, i.e., after discretization... NO, distracting...-KZ
\subsubsection{Analytical}\label{s:analytical}
The theoretical centerpiece of \cite{CCZ} was the following ``Termination Theorem''
giving conditions for existence of a repeated single strategy guaranteeing 
a winning or neutral outcome for player one. This was used to support rigorously all of the
conclusions of that work.

\begin{proposition}[\cite{CCZ}]\label{tthm}
	Suppose for an arbitrary single-state recursive game with termination constant $-t<0$, 
	that a certain strategy for player 1 has associated payoffs $\alpha+  \beta V$ satisfying 
	\be\label{tcond}
	\hbox{\rm $\alpha\geq \alpha_0\geq 0$, $\beta \geq 0$, and
$\alpha \geq t( \beta -1) +\eps$, for some $0<\eps<t$.}  
	\ee
Then, the expected return $V_n$ upon termination of the game at the $n$th step satisfies
	\be\label{Vnest}
	V_n\geq v_n:=\alpha_0 -  \alpha_0 (1-\eps/t)^{\max\{0,n-1\}} -t(1-\eps/t)^n;
	\ee
	that is, the strategy forces a payoff with lower bound \emph{exponentially converging
	to $\alpha_0$}.
	In particular, condition \eqref{tcond} gives sharp criteria for existence of 
	a single strategy forcing $\underline{V} >0$ or $\underline {V}\geq 0$.
\end{proposition}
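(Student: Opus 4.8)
The plan is to establish \eqref{Vnest} by induction on $n$, using that player $1$ may simply commit, at every round, to the one strategy $x^*$ furnished by hypothesis \eqref{tcond}. Write $\alpha(y):=\alpha(x^*,y)$ and $\beta(y):=\beta(x^*,y)$ for the one-shot payoff and stakes multiplier that $x^*$ realizes against an arbitrary response $y$ of player $2$, so that by \eqref{tcond} one has $\alpha(y)\geq\alpha_0\geq 0$, $\beta(y)\geq 0$, and $\alpha(y)\geq t(\beta(y)-1)+\eps$ for every $y$. From \eqref{Vn} and the definition of $Value$, $V_{n+1}=Value(A+BV_n)\geq\min_y\big(\alpha(y)+\beta(y)V_n\big)$. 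Thus the whole statement reduces to the base case $V_0=-t=v_0$ (immediate, since $\max\{0,-1\}=0$) together with the pointwise assertion that, for every pair $(\alpha,\beta)$ with $\alpha\geq\alpha_0\geq 0$, $\beta\geq 0$, and $\alpha\geq t(\beta-1)+\eps$, one has $\alpha+\beta v_n\geq v_{n+1}$. Indeed, granting this: if $V_n\geq v_n$, then $\beta(y)\geq 0$ gives $\alpha(y)+\beta(y)V_n\geq\alpha(y)+\beta(y)v_n\geq v_{n+1}$ for all $y$, hence $V_{n+1}\geq v_{n+1}$.

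For the pointwise assertion I would set $\mu:=1-\eps/t\in(0,1)$ and record $v_{n+1}=\alpha_0(1-\mu^n)-t\mu^{n+1}$ (valid for all $n\geq 0$), together with $v_n=\alpha_0(1-\mu^{n-1})-t\mu^n$ for $n\geq 1$ and $v_0=-t$. For any weight $\theta\in[0,1]$ the two bounds on $\alpha$ combine convexly into $\alpha\geq\theta\alpha_0+(1-\theta)(t\beta-t+\eps)$, whence
\[
\alpha+\beta v_n\ \geq\ \theta\alpha_0+(1-\theta)(\eps-t)+\beta\big((1-\theta)t+v_n\big).
\]
Take $\theta:=1-\mu^n\in[0,1)$. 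The coefficient of $\beta$ is $\mu^n t+v_n$, which is $0$ for $n=0$ and is $\alpha_0(1-\mu^{n-1})\geq 0$ for $n\geq 1$ — this is the step that uses $\alpha_0\geq 0$ and $\mu<1$ — so, since $\beta\geq 0$, that term is nonnegative and may be dropped, leaving $\alpha+\beta v_n\geq\theta\alpha_0+(1-\theta)(\eps-t)=(1-\mu^n)\alpha_0-t\mu^{n+1}=v_{n+1}$. This proves the assertion; the induction then gives $V_n\geq v_n$ for all $n$, and since $v_n=\alpha_0-\mu^{n-1}(\alpha_0+t\mu)$ the lower bound converges to $\alpha_0$ at the geometric rate $\mu$.

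Finally, for the ``sharp criteria'' claim: because $V_1\geq v_1=-t+\eps>-t=V_0$, the limit $\underline V$ in \eqref{underV} is well defined, and $\underline V=\lim_nV_n\geq\lim_n v_n=\alpha_0$; hence \eqref{tcond} with $\alpha_0>0$ forces $\underline V>0$, and \eqref{tcond} with $\alpha_0=0$ (i.e.\ only $\alpha\geq 0$ demanded) forces $\underline V\geq 0$. For the converse — that a single repeated strategy forcing $\underline V>0$ (resp.\ $\underline V\geq 0$) must satisfy a bound of the form \eqref{tcond} — one reads the fixed-point inequality $\alpha(y)+\beta(y)\underline V\geq\underline V$ backwards for that strategy; I would defer to \cite{CCZ} for the details. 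The only genuinely non-mechanical point is the choice of interpolation weight $\theta=1-\mu^n$, equivalently the identification of the sharp geometric rate $\mu=1-\eps/t$ at which the guaranteed return should approach $\alpha_0$; once that ansatz is in hand, everything else — in particular the sign check $(1-\theta)t+v_n\geq 0$, which is exactly where $\alpha_0\geq 0$ enters — is routine.
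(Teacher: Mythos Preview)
Your proof is correct. The induction with the interpolation weight $\theta=1-\mu^n$ is a clean device: it simultaneously kills the $\beta$-coefficient (or renders it nonnegative, so that $\beta\geq 0$ lets you drop it) and leaves exactly $v_{n+1}$ as the remainder. The check that $(1-\theta)t+v_n\geq 0$ is precisely where $\alpha_0\geq 0$ is needed, and you flagged this correctly.

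The paper takes a different route. Proposition \ref{tthm} is quoted from \cite{CCZ} without proof; the paper instead establishes the more general Theorems \ref{ntthm} and \ref{trate} and remarks that they recover \ref{tthm} for the choice $v_*=-t$, $v^*=\alpha_0$. That argument is structural rather than computational: Theorem \ref{ntthm} observes that linearity of $V\mapsto\alpha+\beta V$, together with the endpoint inequalities $\alpha+\beta v_*>v_*$ and $\alpha+\beta v^*\geq v^*$, forbids any fixed point of the (reduced) value map on $[v_*,v^*)$, so the limit $\underline V$ must land at or above $v^*$; Theorem \ref{trate} then extracts the geometric rate by writing $v_n$ as a convex combination of $v_0$ and $\underline V$ and using superlinearity of $\min_j$. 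Your argument is in effect the explicit unwinding of that rate calculation for the specific sequence $v_n$ of \eqref{Vnest}, with the convex-combination idea appearing in the guise of your weight $\theta$. What the paper's packaging buys is modularity --- the transition criterion and the rate are stated for arbitrary intervals $[v_*,v^*]$, which is then exploited to chain strategies across overlapping intervals. What your direct induction buys is a self-contained derivation of the precise bound \eqref{Vnest} without appeal to the fixed-point limit $\underline V$ along the way.
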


Though evidently of practical use, this result has two shortcomings: First, it is designed to sharply
differentiate winning from losing games for player one, but not to determine the sharp value of $\underline{V}$,
as is clearly of great interest in applications.  Second, it concerns only repeated strategies, whereas
it could well be that a sequence of different strategies might be necessary in order to go from $V_0=-t$
to $\underline{V}-\eps$ in the iterative sequence for $\underline{V}$.

Our first new results are the following theorems simplifying and extending the previous one.

\begin{theorem}[Transition criterion]\label{ntthm} 
	For a fixed Player 1 strategy $S$, suppose that $\alpha +\beta V>V$ for $V=v_*$
	and $\alpha +\beta V\geq V$ for $V= v^*$.
	Then, for any initial value $V_0\in [v_*,v^*]$, a limiting value $\underline V\geq v^*$
	may be achieved by repeated play of that strategy.
	Moreover, this criterion is sharp.
\end{theorem}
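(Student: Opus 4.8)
The plan is to reduce the repeated‑play dynamics to iteration of a single scalar map and to extract the conclusion from its monotonicity and concavity. For the fixed Player 1 strategy $S$, with associated probability vector $x_S$, set $f(V):=\min_y x_S(A+BV)y^T=\min_y(\alpha(y)+\beta(y)V)$; this is exactly the value recursion $V_{n+1}=Value(A+BV_n)$ of the introduction with Player 1 held fixed at $S$, so that repeated play of $S$ from an initial value $V_0$ produces the scalar iteration $V_{n+1}=f(V_n)$, and the hypotheses read precisely $f(v_*)>v_*$ and $f(v^*)\ge v^*$. I would first record two structural facts: $f$ is nondecreasing, since each affine map $V\mapsto\alpha(y)+\beta(y)V$ has slope $\beta(y)=x_SBy^T\ge 0$ (using $B\ge 0$); and $f$ is an infimum of affine functions with uniformly bounded coefficients, hence real‑valued, concave and continuous on $\mathbb R$.

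Next I would show $f(V)\ge V$ throughout $[v_*,v^*]$, with strict inequality on $[v_*,v^*)$: writing $V=\lambda v_*+(1-\lambda)v^*$, $\lambda\in[0,1]$, concavity gives $f(V)\ge\lambda f(v_*)+(1-\lambda)f(v^*)\ge\lambda v_*+(1-\lambda)v^*=V$, strictly when $\lambda>0$ because $f(v_*)>v_*$. In particular $f(V_0)\ge V_0$, and then a one‑line induction using only monotonicity of $f$ gives that $(V_n)$ is nondecreasing ($V_1=f(V_0)\ge V_0$; and if $V_n\ge V_{n-1}$ then $V_{n+1}=f(V_n)\ge f(V_{n-1})=V_n$). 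Hence $\underline V:=\lim_n V_n$ exists in $(-\infty,+\infty]$ and $\underline V\ge V_0\ge v_*$.

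To conclude, I split on whether $\underline V$ is finite. If $\underline V=+\infty$ then $\underline V\ge v^*$ trivially. If $\underline V<+\infty$, continuity of $f$ gives $\underline V=\lim_n f(V_n)=f(\underline V)$, so $\underline V$ is a fixed point of $f$; since $f(V)>V$ on $[v_*,v^*)$, a fixed point lying in $[v_*,+\infty)$ must actually lie in $[v^*,+\infty)$, whence $\underline V\ge v^*$. This simultaneously recovers the fixed‑point characterization of $\underline V$ noted in the introduction.

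For sharpness I would give two minimal counterexamples to possible strengthenings. If the strict hypothesis at $v_*$ is relaxed to $\alpha+\beta V\ge V$, one may take $f$ to be the identity on $[v_*,v^*]$ (realizable whenever Player 2 has a response with $\beta=1$, $\alpha=0$): then starting from $V_0=v_*<v^*$ the sequence is constant, so $\underline V=v_*\not\ge v^*$, and strictness cannot be removed. And if $f(v^*)=v^*$ exactly while $f(V)<V$ for $V>v^*$, then $V_0=v^*$ gives the constant sequence with $\underline V=v^*$, showing the conclusion cannot be improved to $\underline V>v^*$. The only point requiring care in the whole argument is the bookkeeping of the case $\underline V=+\infty$ — so ``limiting value'' must be read in the extended sense, consistent with $T$ possibly having no finite fixed point — together with the observation that the monotonicity induction never invokes $f(V)\ge V$ for $V$ outside $[v_*,v^*]$; after the base step only the global monotonicity of $f$ is used.
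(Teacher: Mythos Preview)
Your argument is correct and follows essentially the same route as the paper: reduce to the subgame with Player~1 fixed at $S$, use affinity of each $\alpha(y)+\beta(y)V$ (equivalently, concavity of the pointwise minimum $f$) to interpolate the endpoint hypotheses and conclude there is no fixed point of $f$ on $[v_*,v^*)$, whence the limit $\underline V$, being a fixed point with $\underline V\ge v_*$, must satisfy $\underline V\ge v^*$. Your write-up is in fact more careful than the paper's terse version---you make explicit the monotonicity induction, the $\underline V=+\infty$ case, and the sharpness examples, none of which the paper spells out.
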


\begin{theorem}[Convergence rate]\label{trate} 
	If, above, $\alpha + \beta v_*\geq v_* +\eps$, $\eps>0$, then, for $V_0\in [v_*,v^*]$,
	we have the sharp ``geometric series estimate''
	$ (v^*-V_n) \leq  (1-\eps)^n (v^*-v_*).  $
\end{theorem}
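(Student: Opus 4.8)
The plan is to reduce the statement to a single-step contraction estimate for the scalar recursion underlying repeated play of $S$, and then to iterate it. Write $\vp(V):=\alpha+\beta V$ for the (worst-case) return that the fixed strategy $S$ forces in one round when the continuation value is $V$; since $\beta\ge 0$ this is non-decreasing, and — being a minimum of affine functions over the opponent's responses (or simply affine, if the opponent is also committed) — it is concave and piecewise-linear, with the $n$-round values obeying $V_{n+1}=\vp(V_n)$ for $V_0\in[v_*,v^*]$. Theorem~\ref{ntthm} already gives $\underline V:=\lim_n V_n\ge v^*$; what is new is the rate, and everything will follow from the one-step estimate
\be\label{onestep}
v^*-\vp(V)\ \le\ (1-\eps)\,(v^*-V)\qquad\text{for all } V\in[v_*,v^*].
\ee

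To establish \eqref{onestep} I would introduce $g(V):=\bigl(\vp(V)-V\bigr)-\eps\,(v^*-V)$, which is concave as the sum of the concave function $\vp(V)-V$ and an affine one. At the right endpoint $g(v^*)=\vp(v^*)-v^*\ge 0$, by the hypothesis $\alpha+\beta V\ge V$ at $V=v^*$ carried over from Theorem~\ref{ntthm}. At the left endpoint the new hypothesis $\vp(v_*)=\alpha+\beta v_*\ge v_*+\eps$ gives $g(v_*)=\bigl(\vp(v_*)-v_*\bigr)-\eps(v^*-v_*)\ge \eps\bigl(1-(v^*-v_*)\bigr)\ge 0$, using here that $v^*-v_*\le 1$ (the relevant normalization: e.g.\ $v_*=-t_1=-1$ with $v^*\le 0$). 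Since a concave function that is nonnegative at both endpoints of an interval is nonnegative on all of it, $g\ge 0$ on $[v_*,v^*]$, and rearranging $\vp(V)-V\ge \eps(v^*-V)$ is precisely \eqref{onestep}. I expect this to be the main obstacle: it is exactly \emph{concavity} of $\vp$ — equivalently, that the worst-case one-round return is a $\min$ of affine functions — together with the two pinned values (return $\ge V$ at $v^*$, return $\ge V+\eps$ at $v_*$) that forces the sharp geometric ratio $1-\eps$; for a convex or otherwise unstructured $\vp$ the endpoint data would not control the interior.

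Iterating is then routine. Since $\vp$ is non-decreasing and $\vp(v_*)\ge v_*$, one has $V_n\ge v_*$ for all $n$, which gives the base case $v^*-V_0\le v^*-v_*$. For the inductive step I would note $v^*-V_{n+1}\le (1-\eps)\max\{v^*-V_n,\,0\}$: if $V_n\le v^*$ this is \eqref{onestep}, and if $V_n>v^*$ then $V_{n+1}=\vp(V_n)\ge\vp(v^*)\ge v^*$, so the left side is $\le 0$. As $(1-\eps)^n(v^*-v_*)\ge 0$ (in the relevant range $0<\eps<1$, with $v^*\ge v_*$), the inductive hypothesis $v^*-V_n\le (1-\eps)^n(v^*-v_*)$ then yields $v^*-V_{n+1}\le (1-\eps)^{n+1}(v^*-v_*)$. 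Informally, \eqref{onestep} applies verbatim until the orbit overshoots $v^*$, after which all subsequent terms lie at or above $v^*$ and the bound is trivially true.

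Finally, for sharpness I would exhibit the extremal affine case $\vp(V)=v^*+(1-\eps)(V-v^*)$, i.e.\ $\beta=1-\eps\ge 0$ and $\alpha=\eps v^*$: when $v^*-v_*=1$ both hypotheses hold with equality, and starting from $V_0=v_*$ one computes $v^*-V_{n+1}=(1-\eps)(v^*-V_n)$ at every step, hence $v^*-V_n=(1-\eps)^n(v^*-v_*)$ identically. Thus neither the ratio $1-\eps$ nor the prefactor $v^*-v_*$ can be improved, and the same family meets the transition criterion of Theorem~\ref{ntthm} exactly at its boundary.
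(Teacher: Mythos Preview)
Your argument is essentially the paper's: both restrict to the subgame with player~1 fixed at $S$, exploit concavity of $V\mapsto\min_j(\alpha_j+\beta_j V)$ (what the paper phrases as ``superlinearity of $\min_j$'') together with the two endpoint conditions to obtain the one-step contraction $\varphi(V)-V\ge\eps(v^*-V)$, and then iterate. Your explicit flagging of the normalization $v^*-v_*\le 1$ is apt---the paper's own passage from $T(v_0)-v_0\ge\eps$ to $T(v_0)-v_0\ge\eps(\underline V-v_0)$ uses the same thing without comment---so on this point you are, if anything, more careful; your treatment of the overshoot case and the explicit sharpness example are likewise additions rather than deviations.
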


Theorems \ref{ntthm} and \ref{trate} recover for the choice $v_*=-t$, $v^*=\alpha_0$ 
the result of Theorem \ref{tthm}.
The choice $v_*=-t$, $v^*=\underline{V}$ on the other hand, 
where $\underline{V}$ is a fixed point of the value map $T$, with
$S$ taken to be the optimal fixed-point strategy for player 1, gives a sharp criterion for existence of a
repeated strategy forcing $\underline{V}$, remedying the first deficiency mentioned above.
Moreover, Theorem \ref{ntthm} may be applied repeatedly over a series of interlacing intervals
$[v_{*,i}, v^{*,i}]$ to reach from $-t$ to $\underline{V}$ by finite repetitions of a sequence of different strategies
$S_i$, thus remedying the second as well.

\medskip

{\it Overshoot.} Theorems \ref{ntthm}-\ref{trate} are meant to be used in combination
with numerical iteration of \eqref{Vn}.
This raises the question of {\it numerical overshoot}: is it possible that numerical error could
lead to an approximation $\tilde V_n$ of $V_n$ that is strictly larger than $\underline{V}$, with
subsequent approximations $\tilde V_m$, $m>n$ increasing to a different fixed point $V^*>\underline{V}$
that does not represent the value forceable by player one?
In this case, the a priori validation afforded by Theorem \ref{ntthm} would be useless, as the obtained
value $V^*$ would be incorrect.

To put things another way, what we would like is for $\underline{V}$ to be {\it strictly attracting from above}
in the sense that for $V>\underline{V}$ and $|V-\underline{V}|$ sufficiently small, 
\be\label{sa}
\hbox{\rm
$T(V)-V\leq -\delta |V-\underline{V}|$, \qquad for some $\delta>0.$}
\ee

The following theorem, and our final analytical result,
shows that this is generically the case, i.e., numerical overshoot is in general not a worry.

\begin{theorem}[Attraction from above]\label{overshoot} 
	Let the optimal fixed point strategies for players 1 and 2, defined as optimal strategies
	for game $A+B\underline{V}$, be \emph{unique}.
	Then, $\underline{V}$ is attracting from above.
\end{theorem}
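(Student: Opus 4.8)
\emph{Strategy.} Write $\alpha(x,y)=xAy^{T}$ and $\beta(x,y)=xBy^{T}$ for mixed strategies $x,y$, so that by the minimax theorem
$$
T(V)=\max_x\min_y\bigl(\alpha(x,y)+V\beta(x,y)\bigr)=\min_y\max_x\bigl(\alpha(x,y)+V\beta(x,y)\bigr).
$$
Since $B\ge 0$ the map $T$ is nondecreasing; it is also Lipschitz, and, being the value of a matrix game whose entries depend affinely on $V$, piecewise real-analytic with finitely many breakpoints. Let $x^{*},y^{*}$ denote the unique optimal strategies of $A+B\underline V$ and set $\beta^{*}:=\beta(x^{*},y^{*})$. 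The plan is to show that $T$ is differentiable at $\underline V$ with $T'(\underline V)=\beta^{*}<1$; then $T(V)-V=(\beta^{*}-1)(V-\underline V)+o(V-\underline V)$ near $\underline V$, which yields \eqref{sa} on a right neighborhood of $\underline V$ with $\delta=\tfrac12(1-\beta^{*})$.

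\emph{Step 1: $T$ is differentiable at $\underline V$ with $T'(\underline V)=\beta^{*}$.} Here I would invoke the classical marginal-value theory for matrix games (Mills; equivalently the envelope/Danskin theorem for saddle functions): with $X^{*},Y^{*}$ the optimal-strategy sets of $A+B\underline V$, the one-sided derivatives of $T$ at $\underline V$ are $\max_{x\in X^{*}}\min_{y\in Y^{*}}\beta(x,y)$ and $\min_{y\in Y^{*}}\max_{x\in X^{*}}\beta(x,y)$. The easy halves of these identities are immediate — for instance, feeding $y^{*}$ into the outer minimum and using $\alpha(x,y^{*})+\underline V\beta(x,y^{*})\le\underline V$ gives $T(\underline V+h)\le\underline V+h\max_x\beta(x,y^{*})$ — while the sharpening to $X^{*}\times Y^{*}$ uses compactness of near-optimal sets together with continuity; I would either cite this or include the short argument. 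Under the uniqueness hypothesis $X^{*}=\{x^{*}\}$, $Y^{*}=\{y^{*}\}$, both expressions collapse to $\beta^{*}$, so $T$ is differentiable at $\underline V$ with derivative $\beta^{*}$; in particular $\underline V$ is not a breakpoint of $T$, hence $T$ is analytic in a neighborhood of $\underline V$.

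\emph{Step 2: $\beta^{*}\le 1$.} The limit $\underline V$ is the smallest fixed point of $T$ in $[-t_{1},\infty)$: $V_{0}=-t_{1}$ lies at or below every such fixed point, $T$ is monotone, so by induction each $V_{n}$ stays at or below the least one, while $\underline V$ is itself a fixed point $\ge -t_{1}$. Consequently $T-\mathrm{id}$ has no zero on $[-t_{1},\underline V)$ and is positive at $-t_{1}$ (the case $V_{1}>V_{0}$), so $T(V)>V$ for all $V\in[-t_{1},\underline V)$. For such $V$ one has $\bigl(T(V)-\underline V\bigr)/(V-\underline V)<1$, and letting $V\to\underline V^{-}$ gives $T'(\underline V)=\beta^{*}\le 1$.

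\emph{Step 3 (the crux): $\beta^{*}<1$.} It remains to exclude $\beta^{*}=1$, and this is where I expect the real work to lie. If $\beta^{*}=1$ then $T'(\underline V)=1$, so the analytic function $T-\mathrm{id}$ vanishes to order $m\ge 2$ at $\underline V$; being strictly positive just to the left (Step 2), it is then either nonnegative just to the right (so $\underline V$ does not attract from above at all, for $m$ even) or negative there but only of order $m\ge 3$ (so there is no \emph{linear} attraction rate \eqref{sa}) — in either case the desired conclusion fails. One must therefore show $\beta^{*}=1$ is incompatible with uniqueness of $x^{*},y^{*}$. I would start from the two facts forced by $\beta^{*}=1$, namely $\alpha(x^{*},y^{*})=\underline V(1-\beta^{*})=0$ together with the optimality inequalities $\alpha(x^{*},y)\ge\underline V\bigl(1-\beta(x^{*},y)\bigr)$ and $\alpha(x,y^{*})\le\underline V\bigl(1-\beta(x,y^{*})\bigr)$ for all $x,y$, and try to deduce either a second optimal strategy for one player in $A+B\underline V$ (contradicting uniqueness), or that $x^{*}$ remains value-forcing for $A+BV$ on an interval with $\underline V$ in its interior, which via Theorem~\ref{ntthm} would produce a fixed point of $T$ strictly below $\underline V$, contradicting Step 2. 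Carrying out this elimination cleanly is the main obstacle; granting it, $\beta^{*}<1$, and the first-order expansion of the opening paragraph completes the argument.
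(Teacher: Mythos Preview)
Your Step~3 is, as you acknowledge, incomplete, and it is a genuine gap rather than a routine detail. The two routes you sketch for excluding $\beta^{*}=1$ do not obviously close: the trivial $1\times1$ game with $\alpha=0$, $\beta=1$ has unique optimal strategies yet $\beta^{*}=1$ and $T(V)\equiv V$, so uniqueness alone does not force $\beta^{*}<1$, and neither a ``second optimal strategy'' nor a ``lower fixed point'' contradiction materializes there. (The same example shows the theorem, read literally with \eqref{sa}, already fails in that degenerate case; the paper's discussion preceding the proof sets it aside explicitly.) So even granting Steps~1--2, which are correct, your program cannot finish without an additional nondegeneracy input.

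The paper's argument is quite different and avoids computing $T'(\underline V)$ altogether. It freezes player~2 at the unique optimum $y^{*}$ and passes to the reduced one-player map
\[
T_{y^{*}}(V)=\max_{x}\bigl(\alpha(x,y^{*})+V\,\beta(x,y^{*})\bigr)
=\max_{i}\bigl((Ay^{*T})_{i}+V\,(By^{*T})_{i}\bigr),
\]
which is a maximum of finitely many affine functions of $V$ and hence \emph{convex} and piecewise linear. Such a function meets the diagonal $V\mapsto V$ in at most two points $V_{*}\le V^{*}$, the lower attracting (slope $<1$) and the upper repelling (slope $>1$), barring the tangency degeneracy. Since $T(V)\le T_{y^{*}}(V)$ for all $V$ (fixing player~2 to $y^{*}$ can only help player~1) with equality at $\underline V$, and since $\underline V$ is the least fixed point reached by iteration from below, the paper identifies $\underline V$ with the attracting point $V_{*}$ of the reduced map; then $T(V)\le T_{y^{*}}(V)<V$ for $V$ just above $\underline V$, giving attraction from above for the full game. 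No envelope/Danskin machinery enters; uniqueness is used only to fix $y^{*}$ and to dismiss degeneracy in the reduced picture.

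What your approach would buy, were Step~3 completed, is an explicit rate $\delta\sim\tfrac12(1-\beta^{*})$ tied directly to the optimal pair. What the paper's route buys is a much shorter, geometric argument that never confronts $\beta^{*}<1$ head-on: convexity of the reduced one-player map replaces differentiability of the full two-player value function.
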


\subsubsection{Numerical}\label{s:numerical}
The main conclusions of \cite{CCZ}, recorded in Propositions \ref{opt} and \ref{subopt} below,
were to (i) compute explicitly the symmetric Nash equlibrium for $n$-player guts, and (ii)
to confirm rigorously that this is an optimal strategy for player 1 against ``bloc'' coalitions
of players $2$-$n$, defined as coalitions in which all players play identical strategies,
but is not optimal against general, nonbloc coalitions.
These results were derived analytically, and were of qualitative (optimal vs. subobtimal) type.
Our goal here is by numerical investigation to obtain {\it quantitative} results.

As described in more detail later on, our first main numerical result was to develop a workable algorithm
for approximating the solution of generalized recursive games, consisting of numerical implementation
of the  fixed-point iteration \eqref{Vn} using the Fictitious Play routine supported in Nashpy \cite{NPy}.
Though we did not do it here, this could be used in principle together with the a posteriori verification
afforded by Theorem \ref{ntthm} to yield rigorous upper and lower bounds to any desired tolerance.

This algorithm worked extremely well in practice, allowing us to treat 3-player continuous guts with 201
mesh point discretization and 4- and 5-player guts at 21 mesh point discretization, all on a standard laptop
with little attempt at optimization.
Applied to 3-player guts it yielded the result that players 2-3 working in collaboration can win
$\approx 1.2\%$ of the ante of player 1, similar to a typical ``house edge'' in blackjack.
This improved on the lower bound of $\approx 0.4 \%$ obtained in \cite{CCZ}
by explicit construction using a blend of two well-chosen strategies.
Strikingly, the approximate optimal fixed-point strategy found here- that is, the optimal strategy
for the game $A+B\underline{V}$ at the fixed-point value- is quite close to the winning
strategy constructed in \cite{CCZ},
in particular consisting of a blend of just two strategies.
See \cite{CCZ} for some intuition behind this choice.
Moreover, this strategy yields $\alpha - \beta t_1> -t_1$, verifying by the condition of Theorem \ref{ntthm}
that {\it repeated application of this single strategy suffices to yield $\underline{V}$.}

Experiments pitting player 1 versus players $2$-$n$ with $n=4$ and $5$ yielded an optimal
strategy with a strikingly simple``pseudo-bloc'' structure, 
in the sense that players $3$-$n$ play always the same strategy
while player $2$ sometimes deviates. Imposing this structure by force and solving the resulting reduced 
strategy-set game allowed us to treat the 1 v. n problem up to $n=15$ with a fairly high discretization
of 101 mesh points. As described later on, this appears to yield a smooth curve of payoff with respect to $n$,
with asymptotic value $\approx 16\%$ of player 1's ante as $n\to \infty$.

The optimal pseudo-bloc strategy was then coded into a continuous guts-playing ``bot'' to form an instructional
interactive game, in which players test their skill against a virtual $n$-player coalition armed with the
optimal game-theoretic result. The authors and fellow REU students found this game entertaining and
even mildly addictive.

A side-experiment was to code up 3-player Fictitious Play for continuous guts with players responding as individual
agents.  One might imagine some kind of oscillatory behavior with different 
temporary coalitions forming and dissolving.  However, the result was convergence to (the symmetric Nash) equilibrium
similarly as in the 2-player case.
This gives a sense in which the Nash equilibrium is relevant to the 3-player game.

%(TODO, HERE: add Jacob game II discussion) NO..

Last but not least, we considered the ``Weenie rule'' variant of Guts discussed in \cite[Appendix B]{CCZ},
extending the investigation of \cite{CCZ} to the nonbloc case.
Surprisingly, we found for this slight modification that the Nash equlibrium strategy is actually
optimal, not only agains bloc strategies, but against arbitrary player $2$-$n$ coalitions.
That is, continuous Guts with the Weenie rule is a rare example of a realistic multi-player game
with a simple and explicit exact solution!

\subsection{Discussion and open problems}\label{s:disc}
Our analysis resolves the main questions posed in \cite{CCZ}
regarding general single-state generalized recursive games with termination fee, 
and continuous guts in particular.
It might be interesting to consider also the ``gambler's ruin'' problem in which play continues until
the players resources are depleted.
Another important open problem is the analysis of the actual card-game version of Guts Poker.
As described in \cite{CCZ}, this involves some interesting combinatorial difficulties having to do with
lack of replacement of cards within the deck, and associated lack of independence between probabilities of
players hands.

The issue of asynchronous vs. synchronous coalition seems very interesting to explore further from a
philosophical and practical point of view. The development of an efficient and convenient numerical algorithm
for the evaluation of the asynchronous maximin $\max_x \min_{y,z}\psi(x,y,z)$ we view as an important
open problem even for standard one-shot games.
In particular, it is a very interesting question whether there exists a winning {\it asynchronous}
player $2$-$n$ coalition strategy for standard continuous guts, in the sense that it forces a 
negative expected return for player 1.

Likewise, the observed convergence to Nash equilibrium of Fictitions Play for 3-player guts seems
quite interesting from the point of view of behavior in multi-player games.
An extremely interesting open problem would be to {\it prove} convergence of (FP) for 3- and or n-player guts,
and identify the property(ies) that
guarantee it, particulary if these might extend to other relevant problems.

Related to the last two problems is to investigate Fictitious Play for the modified game
in which players 2-n pool their winnings. It is straightforward to see that a symmetric Nash equilibrium
for the original game is also a Nash equilibrium for the modified one.  But, it may be that other
interesting new equilibria exist corresponding to an asynchronous player 2-n coalition.
For continuous 3-player guts, (FP) for this modified game is seen again to converge to the symmetric equilibrium;
it is an interesting open question whether there exist other equilibria favoring players 2-3.

More generally, one may consider the question for general $n$-player symmetric zero-sum games whether it is
possible for Fictitious Play to select a winning coalition strategy, or, say, oscillate between different such
approximate coalitions. We examine this and other issues to do with Fictitious Play in Appendix \ref{s:convFP}.

Finally, it may be interesting to consider the case of $m$ v. $n$ coalitions for general $m$, $n$.
For example, a study of the $2$ v. $2$ case returned a value of $\underline{V}=\overline{V}=0$,
showing that this game is ``fair.'' The optimal strategies, though, were different from the
symmetric Nash equilibrium for the original $4$-player game,
being of a ``blended, non-bloc'' type similar to that of the optimal strategy
for players $2$-$3$ in the $1$ v. $2$ game; see Section \ref{s:numres}.
There is no inherent obstacle to the application of our numerical scheme to this more general
case, the only additional work being to code the associated payoff matrices.

\medskip
{\bf Acknowledgement:} This work was carried out with the aid of opensource packages
Desmos, Nashpy, and SciPy. J.L. and J.P. thank Indiana University, especially
REU director Dylan Thurston and administrative coordinator Mandie McCarty, for their
hospitality during the REU program in which this work was carried out.
We also thank the UITS system at Indiana University for the use of supercomputer cluster Carbonate.
All code used in the investigations of this project, 
as well as the interactive ``bot'' program, is publicly available 
and may be found at \cite{G}.
%Other? 

%%%%%%%%%%%%%%%%%%%%%%%%%% BODY %%%%%%%%%%%%%%%%%%%%%%%%%%%%%%%%

\section{General analysis}\label{s:gen}
We begin by establishing the general results stated in Section \ref{s:analytical}.

\subsection{A posteriori estimates}\label{s:apost}

\begin{proof}[Proof of Theorem \ref{ntthm}]
	It is sufficient to establish the result for the
	the subgame in which player one's strategy is always $S$.
	By linearity with respect to $V$ of the payoff function $\alpha + \beta V$, together
	with the endpoint assumptions $\alpha + (\beta-1) v_* >0$ and
	$\alpha + (\beta-1) v_* \geq 0$, we have
	$\alpha + (\beta-1) V>0$ for any $V\in [v_*,v^*)$, hence there
	is no fixed point of the value map $T$ (for the reduced game) in this half-open interval.
	In particular, the value $\underline{V}\geq v_*$ forced by strategy $S$, since it is a fixed point,
	must satisfy $\underline {V}\geq v^*$.
\end{proof}

\begin{proof}[Proof of Theorem \ref{trate}]
	Considering the subgame in which player one's strategy is always $S$,
	and defining $\theta_n:= (\underline{V}-v_n)/(\underline{V}-v_0)$, 
	it is equivalent to show that $\theta_n\leq (1-\eps)^n$.
	Rearranging, we have
	$$
	v_n= (1-\theta_n)\underline{V}+ \theta_n v_0,
	$$
	whence, by linearity of the payoff matrix, together with the assumptions, 
	$$
	T(v_n)-v_n \geq \theta_n (T(v_0)-v_0)) \geq \theta_n\eps(\underline{V}-v_0)=
	\eps (\underline{V}-v_n).
	$$
	Here, we have used in an important way superlinearity of $\min_j$.
	%NOTE: don't even really need to mention it, except for rmk below...

	Thus, $v_{n+1}=T(v_n)\geq  v_n+  \eps(\underline{V}-v_n)$, and so 
	$$
	\underline{V}-v_{n+1}\leq \underline {V}- (v_n+  \eps(\underline{V}-v_n))
	=(1-\eps)(\underline{V}-v_n),
	$$
	giving by induction the asserted rate
	$ (v^*-V_n) \leq  (1-\eps)^n (v^*-v_*).  $
\end{proof}

\br\label{subsuperrmk}
Note that sublinearity of $\max_i$ wrecks the above argument for the general case in which player
one's strategy is not necessarily fixed. Indeed, it appears difficult to quantify the convergence rate
in the general case, without specifying a particular strategy sequence beforehand.
Luckily, for 3-player Guts, the optimum strategies are repeated and so we may readily estimate the convergence
rate using Theorem \ref{trate}.
\er

\subsection{Overshoot}\label{s:exams}
We next investigate the possibilities for multiple fixed points, and associated phenomena
of overshoot, and duality gap, defined as
strict inequality between $\underline{V}$ and $\overline{V}$.
We start with the simple example of an $m\times 1$ game, in which player 2 has no choice in strategy,
and only player 1's choices affect the outcome.
It is not difficult to see that, among all lines $\alpha_i + \beta_i V$, 
the maximum over $i$ can intersect the fixed-point line $F(V)\equiv V$ at at most two points,
a point $V_*$ with $\beta_i< 1$ and a point $V^*$ with $\beta_i\geq 1$, the first less than the second.
Ignoring the degenerate case $\beta_i=1$, fixed
points with $\beta_i<1$ are attracting from both sides and with $\beta_i>1$ are repelling from both 
sides so long as they remain separated.  
It follows that for $V_*<V^*$ only the point $V_*$ is a candidate for the value $\underline{V}$ of the game,
and it is always attracting from above.
Thus, there is no overshoot.  However, for player 2 initial values $>V^*$ and player 1 initial values $<V^*$,
there can be a duality gap, with $\underline{V}=V_*$ and $\overline{V}=+\infty$.

In the degenerate case $V_*=V^*$ that they intersect, 
the fixed point is attracting from below and repelling from above, hence overshoot to $+\infty$
is possible, even though the correct value of the game is $\underline{V}=V_*$ for initial
values $v_0<V_*$.
An example is the $2\times 1$ game $(\alpha_1,\alpha_2)= (0,0)$, $(\beta_1, \beta_2)=(1/2,2)$, for which
any strategy is a fixed point of $ \alpha +\beta V$ for $V=0$. Moreover, for $V<0$, we see that
$Value(\alpha+ \beta V$ is $<0$ as well, so that $\underline{V}=0$.
This fixed point is attracting from below, but it is repelling from above, so that in this case overshoot
{\it can happen}.  That is, $T(V):=Value(\alpha+ \beta V)>V$ for $V<0$ {\it and} for $V>0$, with $T(V)=V$
only for $V=\underline{V}$.
In the degenerate case that $\beta_i=1$ at the fixed point, $\alpha_i=0$ and there
is a line of fixed points, all neutral, hence both overshoot and duality gap are possible.

Combining these observations, we may now treat the general $m\times n$ case, completing our analysis.

\begin{proof}[Proof of Theorem \ref{overshoot}]
	We have only to note that, when the optimal fixed-point strategies $S_1$ and $S_2$ for players 1 and 2
	are unique, then the reduced $m\times 1$ game obtained by fixing player 2's strategy always as $S_1$
	must still have a unique optimal strategy, so that the fixed-point $\underline{V}$ must coincide
	with precisely one of the fixed points $V_*$ and $V^*$ described in the discussion above.
	Thus, $\underline{V}$ is attracting from above for the reduced $m\times 1$ game, and therefore
	(since player 2 can always do at least as well in the original game) for the full, unrestricted game.
\end{proof}

{\bf Further questions.} We have seen that $m\times 1$ games give rise to $1-2$ finite fixed points,
in a specific attracting/repelling configuration.
It seems a very interesting question how many fixed points are possible for
a general $m\times n$ game, and what configurations of attraction/repulsion.

\section{Payoff functions for continuous Guts}\label{s:payoff}
We next recall from \cite{CCZ} the derivation of the payoff functions for continuous guts.
In computing the expected immediate return $\alpha$,
we take the point of view that the ante is to be paid upon termination of the game.
Thus, for example, if two players hold, then the immediate return to the winning player is the value
$+n$ of the entire pot, and to the losing player $-n$, with the stakes at next round remaining at value $1$,
the multiplier of the pot. The immediate return to any players that drop in this scenario is $0$.

When three players hold, on the other hand,
the pot doubles, effectively paying all players one unit of additional ante in the resulting
higher-stakes game, which they will in fact never have to pay.  So the immediate returns of all players are
incremented by one unit and the stakes- and ante- are changed to 2, exactly balancing out.  Thus, the winning player
receives immediate return $n+1$ and the losing (holding) players receive return $-n+1=-(n-1)$.  Any dropping players
receive $+1$.
One may check that the immediate return is in every event
zero-sum:  if $r$ players hold, then the stakes are multiplied by $(r-1)$, giving all players an additional
``virtual ante'' of $(r-2)$.  Meanwhile, the single winning player receives $+n$ return while the $(r-1)$ 
losing (holding) players receive $-n$, for a total immediate return of $n(r-2) + n- (r-1)n=0$.
It follows by summation across events that the expected immediate return $\alpha$ is zero-sum as well.

For reference, the explicit results for $n=2$ and $n=3$ derived in \cite{CCZ} are as follows. 

\begin{proposition}[\cite{CCZ}]\label{2payprop}
For continuous 2-player Guts, 
	\ba\label{2alpha}
	\alpha(p_1^*,p_2^*)&=
	\begin{cases}
		(1-2p_1^*)(p_1^*-p_2^*) & p_2^*\leq p_1^*,\\
		(1-2p_2^*)(p_1^*-p_2^*) & p_2^*> p_1^*,
	\end{cases}\\
	\beta(p_1^*, p_2^*)&= p_1^*p_2^*+ (1-p_1^*)(1-p_2^*).
\ea
\end{proposition}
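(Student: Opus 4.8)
The plan is to compute $\alpha$ and $\beta$ by direct integration over the independent uniform variables $p_1,p_2\in[0,1]$, using the outcome-by-outcome returns recorded in the discussion preceding the proposition. Thus $\alpha(p_1^*,p_2^*)=\iint_{[0,1]^2}g$, where $g(p_1,p_2)$ is Player~1's immediate return, and $\beta(p_1^*,p_2^*)=\iint_{[0,1]^2}(\text{pot multiplier})\cdot\mathbf 1[\text{round does not terminate}]$. The first step is to partition $[0,1]^2$ into the four axis-parallel rectangles $R_{\epsilon_1\epsilon_2}$, $\epsilon_i\in\{0,1\}$, on which Player~$i$ holds ($\epsilon_i=1$, i.e. $p_i>p_i^*$) or drops ($\epsilon_i=0$, i.e. $p_i\le p_i^*$); on each the game rules are fixed, so it remains only to read off $g$ and the multiplier there and integrate.

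For $\alpha$: on $R_{00}$ the hand is redealt and $g\equiv0$; on $R_{10}$ Player~1 holds alone, the round terminates, and by the ``ante-on-termination'' convention $g\equiv n-1=1$ (collect the pot of value $n=2$, settle the unit ante), while symmetrically $g\equiv-1$ on $R_{01}$; on $R_{11}=(p_1^*,1]\times(p_2^*,1]$ both hold, the stakes multiply by $m-1=1$, and $g=2\,\sgn(p_1-p_2)$. The two one-holder rectangles together contribute $(1-p_1^*)p_2^*-p_1^*(1-p_2^*)=-(p_1^*-p_2^*)$. For the $R_{11}$ term I would treat the case $p_2^*\le p_1^*$ and recover the other from the antisymmetry $\alpha(p_1^*,p_2^*)=-\alpha(p_2^*,p_1^*)$ forced by the zero-sum symmetry of the one-shot game (or simply rerun the integral with the players interchanged). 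When $p_2^*\le p_1^*$, split $R_{11}$ into the strip $\{p_2\le p_1^*\}$, on which $p_1>p_1^*\ge p_2$ gives $\sgn(p_1-p_2)\equiv+1$ and whose area is $(1-p_1^*)(p_1^*-p_2^*)$, and the square $[p_1^*,1]^2$, on which the signed area vanishes by reflection across $\{p_1=p_2\}$; hence this term equals $2(1-p_1^*)(p_1^*-p_2^*)$. Summing,
\[
\alpha(p_1^*,p_2^*)=-(p_1^*-p_2^*)+2(1-p_1^*)(p_1^*-p_2^*)=(1-2p_1^*)(p_1^*-p_2^*),
\]
which is the stated formula for $p_2^*\le p_1^*$, and the two branches agree at $p_1^*=p_2^*$. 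For $\beta$, the round fails to terminate exactly on $R_{00}\cup R_{11}$ (termination occurs precisely when exactly one player holds), and the pot multiplier equals $1$ on each of these, so $\beta(p_1^*,p_2^*)=|R_{11}|+|R_{00}|=(1-p_1^*)(1-p_2^*)+p_1^*p_2^*$.

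There is no real analytic difficulty here — everything reduces to elementary integration once the conventions from the preceding discussion are in hand. The only step that needs care is the bookkeeping: getting the four regions and the sign of $g$ on each right, and noticing that the ``both hold'' rectangle splits into a definite-sign strip plus a reflection-symmetric square contributing nothing. Using the $1\leftrightarrow2$ antisymmetry to halve the casework, rather than redoing the computation with reversed roles, is the one place where a moment's thought saves work.
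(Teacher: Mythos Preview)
Your computation is correct. The paper itself does not prove this proposition; it is quoted from \cite{CCZ} as background (``For reference, the explicit results for $n=2$ and $n=3$ derived in \cite{CCZ} are as follows''), so there is no in-paper argument to compare against. Your direct partition of $[0,1]^2$ into the four hold/drop rectangles, together with the strip-plus-symmetric-square decomposition of $R_{11}$ and the use of antisymmetry to handle the case $p_2^*>p_1^*$, is exactly the elementary calculation one would expect and matches the payoff conventions set out in Section~\ref{s:payoff}.
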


\begin{proposition}[\cite{CCZ}]\label{3prop}
For $3$-player continuous Guts, 
	\ba\label{3alpha}	
	\alpha(p_1^*,p_2^*,p_3^*)&=
		\begin{cases}
		2p_1^*-p_2^*-p_3^*+(p_3^*)^3+3(p_2^*)^2p_3^*-4p_1^*p_2^*p_3^*, &
		p_1^*<p_2^*<p_3^*,\\
		2p_1^*-p_3^*-p_2^*+(p_2^*)^3+3(p_3^*)^2p_2^*-4p_1^*p_2^*p_3^*, &
		p_1^*<p_3^*<p_2^*,\\
		2p_1^*-p_2^*-p_3^*+(p_3^*)^3-3(p_1^*)^2p_3^*+2p_1^*p_2^*p_3^*, &
		p_2^*<p_1^*<p_3^*\\
		2p_1^*-p_2^*-p_3^*+(p_2^*)^3-3(p_1^*)^2p_2^*+2p_1^*p_2^*p_3^*, &
		p_3^*<p_1^*<p_2^*,\\
		2p_1^*-p_2^*-p_3^*-2(p_1^*)^3+2p_1^*p_2^*p_3^*, &
		p_2^*<p_3^*<p_1^*,\\
		2p_1^*-p_2^*-p_3^*-2(p_1^*)^3+2p_1^*p_2^*p_3^*, &
		p_3^*<p_2^*<p_1^*,
	\end{cases}\\
	\beta&= 2-p_1^*-p_2^*-p_3^*+2p_1^*p_2^*p_3^* .
	\ea
\end{proposition}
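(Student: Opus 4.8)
The plan is to prove Proposition~\ref{3prop} by direct computation, averaging the one-round payoff and stakes-multiplier rules recalled above over the three independent uniform hands $p_1,p_2,p_3\in[0,1]$. Write $a=p_1^*$, $b=p_2^*$, $c=p_3^*$, and let $r\in\{0,1,2,3\}$ be the random number of players who hold, so that player $i$ holds with probability $1-p_i^*$, independently. The multiplier $\beta$ then comes out at once: the stakes multiplier equals $1$ when $r=0$ (redeal) and $r-1$ when $r\ge 1$ (the value $r-1=0$ at $r=1$ encoding termination), so $\beta=\mathbb{E}[r]-1+2\,\mathbb{P}(r=0)$; since $\mathbb{E}[r]=(1-a)+(1-b)+(1-c)$ and $\mathbb{P}(r=0)=abc$, this gives $\beta=2-a-b-c+2abc$.

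For $\alpha$ I would split player 1's one-round return into its ``virtual-ante'' part and its ``pot'' part. By the rules recalled above, on the event $\{r\ge 1\}$ player 1 receives $r-2$, together with an extra $+3$ if player 1 is the (a.s.\ unique) winner and an extra $-3$ if player 1 holds but is beaten; on $\{r=0\}$ the return is $0$. Hence
\be
\alpha=\alpha_0+3\big(\mathbb{P}(\text{player 1 wins})-\mathbb{P}(\text{player 1 holds and loses})\big),\qquad
\alpha_0:=\mathbb{E}\big[(r-2)\,\mathbf{1}_{\{r\ge 1\}}\big],
\ee
and $\alpha_0=\mathbb{E}[r]-2+2\,\mathbb{P}(r=0)=1-a-b-c+2abc=\beta-1$, computed exactly as $\beta$.

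The win/lose term I would reduce to a one-dimensional integral by conditioning on player 1's hand $p_1=x$: player 1 holds iff $x>a$, and, given this, player $j$ ($j=2,3$) fails to beat player 1 precisely when $p_j\le\max(p_j^*,x)$, an event of probability $\max(p_j^*,x)$. By independence $\mathbb{P}(\text{player 1 holds and wins})=\int_a^1\max(b,x)\,\max(c,x)\,\dif x=:I$, while $\mathbb{P}(\text{player 1 holds and loses})=(1-a)-I$ and $\mathbb{P}(\text{player 1 wins})=I$ (winning forces holding). Everything thus collapses to
\be
\alpha=\alpha_0+6I-3(1-a).
\ee

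Evaluating $I$ is the only substantive step, and it is where the six cases of \eqref{3alpha} arise. Since $I$ is manifestly symmetric in $(b,c)$ it suffices to take $b\le c$; then on $[a,1]$ the integrand $\max(b,x)\max(c,x)$ is successively $bc$, $cx$, and $x^2$ as $x$ passes $b$ and $c$, and whether these break-points lie in $[a,1]$ splits the computation into the three sub-cases $a\le b\le c$, $b\le a\le c$, $b\le c\le a$ (player 1's threshold smallest, middle, and largest). In each, $I$ is an elementary polynomial integral; substituting into $\alpha=\alpha_0+6I-3(1-a)$ and simplifying recovers one of the three distinct formulas of \eqref{3alpha}, and the $b\leftrightarrow c$ symmetry then produces all six entries. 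I expect the only delicate point to be keeping the virtual-ante/termination bookkeeping consistent---in particular that the $r=1$ winner's immediate return is $n+(r-2)=n-1$, not $n$, and that a dropper's return is $r-2$---since it is exactly this that makes $\alpha_0$ and the term $-3(1-a)$ come out with the right constants.
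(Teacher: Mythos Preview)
Your proposal is correct. The paper itself does not prove this proposition; it is quoted from \cite{CCZ}, so there is no proof in the paper to compare against directly. However, the computational style visible elsewhere in the paper (the Weenie-rule correction in Proposition~\ref{wthm} and the numerical $\alpha$-algorithm of Section~\ref{npayoff}) is a brute-force event enumeration: fix an ordering of the thresholds, list every sub-event according to which players drop and into which sub-intervals the holders' hands fall, compute the probability and payoff of each, and sum.

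Your route is genuinely cleaner. By splitting $\alpha$ into the symmetric ``virtual-ante'' piece $\alpha_0=\mathbb{E}[(r-2)\mathbf 1_{r\ge 1}]$ and the win/lose piece, and then observing that the latter collapses to the single one-dimensional integral $I=\int_a^1\max(b,x)\max(c,x)\,dx$, you replace a long tabulation of cases with one elementary integral whose piecewise structure \emph{is} the case split of~\eqref{3alpha}. I checked your formula $\alpha=\alpha_0+6I-3(1-a)$ in all three representative orderings and it reproduces the stated polynomials exactly. The advantage of your decomposition is that it scales: the same identity $\alpha=\alpha_0+2nI-n(1-a)$ with $I=\int_a^1\prod_{j\ge 2}\max(p_j^*,x)\,dx$ holds for $n$-player Guts, whereas the enumeration approach grows combinatorially. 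The enumeration style, on the other hand, is what one naturally codes for numerical work and is easier to adapt to variants like the Weenie rule where the $r=0$ event carries a hand-dependent payoff.
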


\subsection{Bloc strategies and symmetric Nash equlibria}
Following \cite{CCZ}, we define a ``bloc'' strategy for the $n$-player game to be a strategy
$(p_1^*, p_2^*,\dots, p_2^*)$ in which players $2$-$n$ pursue identical strategies.
Restricting to bloc strategies reduces the $n$-player game to a $2$-player game that is 
a simplified version of player $1$ vs. a player $2$-$n$ coalition.
It is not difficult to see \cite{CCZ} that for a symmetric one-shot game, the optimal bloc strategy
corresponds to the symmetric Nash equilibrium guaranteed by \cite{N}.
Using the explicit payoff functions of Propositions \ref{2alpha}-\ref{3alpha} for $n=2,3$, 
along with partial information obtained in \cite{CCZ} on the bloc payoff function for general $n$,
we have the following explicit description of the symmetric Nash equilibrium, and optimality for the bloc game.

\begin{proposition}[Cor 8.9]{CCZ}]\label{opt}
For $n$-player continuous Guts, the unique symmetric Nash equilibrium is $(p_1^*, \dots, p_1^*)$, with 
	\be\label{NE}
	p_1^*=1/2^{1/(n-1)}.
	\ee
Moreover, this choice forces total value $\underline{V}=0$ for player 1 in the bloc $1$ v. $(n-1)$ version of Guts.
\end{proposition}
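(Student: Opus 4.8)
The plan is to reduce everything to a one-dimensional optimization problem on the bloc diagonal. First I would restrict to bloc strategies, so that players $2$--$n$ all play the same threshold $p_2^*$; by the cited observation from \cite{CCZ} this reduces the symmetric $n$-player game to a two-player zero-sum game with payoff $\alpha_{\mathrm{bloc}}(p_1^*,p_2^*)$ and multiplier $\beta_{\mathrm{bloc}}(p_1^*,p_2^*)$, where these are the restrictions of the $n$-player $\alpha$, $\beta$ to the set $\{p_2^*=p_3^*=\cdots=p_n^*\}$. The symmetric Nash equilibrium of the original game is then exactly a diagonal optimum $(p^*,p^*,\dots,p^*)$ of this reduced game, so it suffices to find the unique $p^*\in[0,1]$ at which neither player can profitably deviate and to verify that the resulting long-term value is $0$.

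Next I would compute, along the diagonal $p_2^*=p_1^*=p$, the partial-derivative (first-order) condition. The key structural fact, noted in the derivation of $\alpha$ above, is that the one-shot game is zero sum and symmetric, so $\alpha_{\mathrm{bloc}}(p,p)=0$ for every $p$; the equilibrium condition is therefore that $p\mapsto \alpha_{\mathrm{bloc}}(p_1^*,p)$ has a critical point at $p_1^*=p$, i.e. $\partial_{p_1^*}\alpha_{\mathrm{bloc}}(p,p)=0$. Using the explicit formulas of Proposition~\ref{2payprop} for $n=2$, Proposition~\ref{3prop} for $n=3$, and the partial information on the bloc payoff for general $n$ quoted from \cite{CCZ}, this critical-point equation simplifies to an algebraic relation whose unique root in $(0,1)$ is $p_1^*=2^{-1/(n-1)}$; I would then check the second-order (concavity) condition to confirm this critical point is a genuine maximin, and uniqueness from the fact that the relevant one-variable payoff is strictly concave (or piecewise strictly monotone with a single sign change) on $[0,1]$. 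This is the step I expect to carry the most computational weight, but it is a routine one-variable calculus computation on explicitly given piecewise-polynomial functions.

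Finally I would pin down the long-term value. At the equilibrium, $\alpha=\alpha_{\mathrm{bloc}}(p_1^*,p_1^*)=0$, and the value map becomes $T(V)=\mathrm{Value}(A+BV)$ evaluated at the symmetric strategy, giving $T(V)=\beta_{\mathrm{bloc}}(p_1^*,p_1^*)\,V$. Since $0\le\beta<1$ at this point (the event that no one holds has positive probability, strictly decreasing the effective stakes multiplier below $1$), the unique fixed point of $T$ reachable from $V_0=-t_1=-1$ by the monotone iteration \eqref{Vn} is $\underline V=0$; equivalently, one may invoke Theorem~\ref{ntthm}/Theorem~\ref{trate} with $v_*=-1$, $v^*=0$, noting $\alpha+(\beta-1)v_*=1-\beta>0$ and $\alpha+(\beta-1)v^*=0$, which gives $\underline V\ge 0$, while the symmetric, zero-sum structure (player $2$--$n$ can force the same bound by symmetry) gives $\overline V\le 0$, hence $\underline V=\overline V=0$. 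The main obstacle, then, is purely bookkeeping: assembling the general-$n$ bloc payoff from the partial data of \cite{CCZ} in enough detail to extract the critical-point equation and confirm concavity; the fixed-point/value part is immediate from the general theory already developed.
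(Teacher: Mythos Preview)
The paper does not actually prove this proposition; it is quoted verbatim from \cite{CCZ} (as Corollary~8.9 there), so there is no in-paper argument to compare against. Your overall plan---reduce to the bloc two-player game, locate the symmetric equilibrium by a first-order condition on the diagonal, then upgrade from one-shot optimality to $\underline V=0$ via the fixed-point machinery---is the natural one and matches the structure of \cite{CCZ}.

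That said, your final paragraph has a real gap. You assert that ``the value map becomes $T(V)=\beta_{\mathrm{bloc}}(p_1^*,p_1^*)\,V$'' by evaluating at the symmetric strategy, but $T(V)=\mathrm{Value}(A+BV)$ is a min--max over \emph{all} strategies; the symmetric Nash equilibrium of the $\alpha$-game is not automatically the saddle of $\alpha+\beta V$ when $V\ne 0$. The same error recurs when you invoke Theorem~\ref{ntthm}: the hypothesis $\alpha+\beta v_*>v_*$ at $v_*=-1$ must hold for \emph{every} bloc response $p_2^*$, not only at $p_2^*=p_1^*$. Writing $\alpha+(\beta-1)v_*=1-\beta$ silently sets $\alpha=0$, which is false off the diagonal, and your heuristic ``all-drop has positive probability, so $\beta<1$'' is incorrect on two counts: an all-drop round leaves the stakes unchanged (multiplier $1$, not $<1$), and $\beta$ can in fact exceed $1$---already for $n=3$ one has $\beta_{\mathrm{bloc}}(1/\sqrt{2},0)=2-1/\sqrt{2}>1$. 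What you actually need is the uniform estimate $\alpha(p_1^*,p_2^*)+1>\beta(p_1^*,p_2^*)$ for all $p_2^*\in[0,1]$, which is a genuine additional computation beyond the one-shot optimality $\alpha(p_1^*,\cdot)\ge 0$ that your second step establishes.
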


\br\label{Jacobrmk}
As in \cite[Prop. A.2]{CCZ}, we note that
no coalition employing a bloc strategy (even a mixed strategy) of n players 
can beat a single player in any zero-sum symmetric game, as player 1 can always switch to the same strategy
to give payoff zero.
\er

\subsection{Suboptimality in the non-bloc game}
Neither the general payoff function nor the bloc version have been computed analytically for $n$-player guts
$n\geq 4$, here or in \cite{CCZ}.
However, strategically chosen test strategies were used in \cite{CCZ} to obtain the following negative conclusion.

\begin{proposition}[Cor. 8.16 \cite{CCZ}]\label{subopt}
	For the general (i.e., nonbloc) $1$ v. $(n-1)$ version of continuous Guts, there exists
	a strategy for players $2$-$n$ forcing a negative return for player 1. In particular, the
	Nash equilibrium strategy $p_1^*$ is suboptimal for player 1 vs. an $n-1$ player coalition.
\end{proposition}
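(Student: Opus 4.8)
The plan is to exhibit an explicit \emph{non-bloc} mixed strategy $\sigma$ for the coalition $\{2,\dots,n\}$ and to show that, against $\sigma$, player~1's total return is strictly negative no matter how he plays. I would first make two reductions. On player~1's side, his best response is always a threshold strategy $p_1^{*}\in[0,1]$ (general hold-sets are majorized by thresholds, as recalled in the introduction), so it suffices to control $\sigma$ against all $p_1^{*}$. On the ``infinite game'' side, fix $\sigma$, let $\underline V$ be the limiting value of the iteration $V_{n+1}=g(V_n)$, $V_0=-1$, against $\sigma$, where $g(V):=\max_{p_1^{*}}\big(\alpha(p_1^{*},\sigma)+\beta(p_1^{*},\sigma)V\big)$ is the one-round value map of the reduced $1$ v.\ $\sigma$ game. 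Since $g$ is a maximum of affine functions it is convex, and from the monotonicity and fixed-point facts recalled before Theorem~\ref{ntthm} (together with $\beta\ge0$, which forces $V^{\pi,\sigma}_n\le V_n$ for any fixed player-1 strategy $\pi$) one checks that if $g(V)<0$ for all $V\in[-1,0]$ then $\underline V<0$ and every player-1 strategy returns at most $\underline V$. By convexity of $g$ this in turn reduces to the two scalar inequalities
\[
\max_{p_1^{*}}\alpha(p_1^{*},\sigma)<0\qquad\text{and}\qquad \max_{p_1^{*}}\big(\alpha(p_1^{*},\sigma)-\beta(p_1^{*},\sigma)\big)<0 .
\]
By Remark~\ref{Jacobrmk} no bloc $\sigma$ can satisfy these, so the construction must genuinely use a non-bloc configuration.

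Next I would build $\sigma$ as a symmetrized two-configuration blend: with weight $\lambda$ all of players $2$–$n$ play the Nash threshold of Proposition~\ref{opt}; with weight $1-\lambda$ they play a non-bloc configuration in which one (or two) designated coalition members shift to auxiliary thresholds $b,c$, symmetrized over which member is designated so that $\sigma$ treats players $2$–$n$ interchangeably; the free parameters $\lambda,b,c$ are then tuned. For $n=3$, Proposition~\ref{3prop} gives $\alpha$ and $\beta$ explicitly, so $\alpha(p_1^{*},\sigma)$ and $\beta(p_1^{*},\sigma)$ are explicit piecewise cubics in $p_1^{*}$ with coefficients affine in $\lambda$ and polynomial in $b,c$, and a short search around the Nash threshold (essentially the blend used in \cite{CCZ}) produces admissible parameters. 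For $n\ge4$, where no closed form for $\alpha,\beta$ is available, I would instead compute by hand only the quantities that actually change under the perturbation — the probability that player~1 holds and wins, the probability that he holds and loses, and the expected stakes multiplier — each of which is a low-degree polynomial in $p_1^{*},b,c$ coming from a direct enumeration of who-holds events, and combine these with the partial information on the bloc payoff function recorded in \cite{CCZ}.

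Finally I would verify the two displayed inequalities: each side is a continuous piecewise-polynomial function of $p_1^{*}$ whose pieces are cut out by the ordering of $p_1^{*}$ relative to the coalition thresholds, so on each piece it is a polynomial (degree $\le3$ for $n=3$) whose maximum is attained at an endpoint or at one of finitely many interior critical points, all of which can be located and bounded explicitly, and one also checks the strict sign at the finitely many kink points. This yields $g(V)<0$ on $[-1,0]$, hence $\underline V<0$, proving the existence claim; the ``in particular'' statement then follows since playing the fixed Nash threshold $p_1^{*}$ against this $\sigma$ returns at most $\underline V<0$, in contrast to the value $0$ it secures against every bloc coalition by Proposition~\ref{opt}, so the Nash threshold is not robust against non-bloc coalition play. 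I expect the genuine obstacle to be the case $n\ge4$: with no closed-form payoff, one must carefully isolate exactly which holding/winning events and stakes multipliers are perturbed by the non-bloc shift and keep the resulting inequality uniform in $p_1^{*}$ across all of its kink points — elementary in principle but combinatorially delicate.
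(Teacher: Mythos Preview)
The paper does not prove this proposition; it is quoted from \cite{CCZ}, and the only information given here about the proof is that the winning coalition strategy constructed there is ``a blend of two pure strategies: one of bloc type $(p_2^*,\dots,p_2^*)$, and the other of `pseudo-bloc' type $(p_2^*,p_3^*,\dots,p_3^*)$.'' Your plan is exactly this construction---a mixture of a bloc configuration with a configuration in which one designated coalition member deviates---followed by a direct verification that player~1's one-shot payoff is strictly negative for every threshold $p_1^*$, which via the value iteration forces $\underline V<0$. So your approach coincides with the one the paper attributes to \cite{CCZ}.

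Two small remarks. First, your reduction to the pair of inequalities $\max_{p_1^*}\alpha<0$ and $\max_{p_1^*}(\alpha-\beta)<0$ is slightly redundant: since $g(V)=\max_{p_1^*}(\alpha+\beta V)$ is nondecreasing in $V$ (because $\beta\ge0$), the single condition $g(0)=\max_{p_1^*}\alpha<0$ already forces $V_n<0$ for all $n\ge1$ and hence $\underline V<0$; the second inequality is only needed if you want a quantitative lower bound near $V=-1$. Second, the construction in \cite{CCZ} as described here uses a common threshold $p_2^*$ in both the bloc and pseudo-bloc components (not the Nash value in the bloc part), and does not symmetrize over which coalition member deviates; your symmetrization is harmless for player~1's payoff by symmetry, but is an extra step not present in the cited argument.
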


The strategy constructed in Proposition \ref{subopt} was a blend of two pure strategies: one of
bloc type $(p_2^*, \dots, p_2^*)$, and the other of ``pseudo-bloc'' type $(p_2^*, p_3^*, \dots, p_3^*)$.
As described in Section \ref{s:results}, this leaves open the question of the optimal strategy in the full
(nonbloc) $n$-player game, $n\geq 3$, along with the values $\underline{V}$ and $\overline{V}$ forceable
by players $1$ and $2$-$3$, respectively,
the answers to which are the main focus of our numerical investigations.

\section{Numerical algorithms}\label{s:numalg}
We now describe the numerical algorithms used to perform our computations.

\subsection{Numerical payoff functions}\label{npayoff}

\subsubsection{General $\alpha$-function}

In order to study general 
$n$-player Guts, we developed a function in Python that can compute the expected payoff $\alpha(p_1, p_2, ..., p_n)$ for any $n$. The algorithm is as follows. \newline

1) The strategies are first reordered from $(p_1, p_2, ..., p_n)$ to $(p_1^*,p_2^*, ..., p_n^*)$ such that $p_i^* \leq p_j^*$ for $i \leq j$; essentially, the strategies are sorted in increasing order. An array of $n$ zeroes named $value$ is initialized, with each index $i$ representing the expected payoff of the player playing the strategy $p_i^*$.\newline

2) The number of players who drop is first determined and denoted as $0 \leq d \leq n - 1$. We do not need to account for the case where all players drop, since in that scenario the expected payoff is 0 for all players. The number of players who hold then is simply $n - d$ and is denoted as $h$.\newline

3) A subset $D$ of the players of size $d$ is then chosen from $n \choose d$ possible subsets to represent the players who drop. The complement $H$ of size $h$ is then the subset of players who hold.\newline

4) Of the players in $H$, there are two possibilities for each player. Let $p^H$ be the highest strategy in $H$. Then either a player receives a hand that is at least $p^H$ in which case the player is in "fair play", or the player receives a hand that is less than $p^H$ in which case the player is guaranteed to lose. A certain number of losers $0 \leq l < h$ is and a subset $L$ that represents the losers from $h \choose l$ possibilities are chosen. This also determines the number of players in fair play $f$ and a subset $F$ that represents these players so that $F \cup L = H$.\newline

5) Now that all players have either dropped, lost, or contended in fair play, we can calculate the probability of this scenario with 
\begin{equation*}
    probability = {\displaystyle \prod_{p \in D} p \prod_{p \in F} (1 - p^H) \prod_{p \in L} (p^H - p)}
\end{equation*}
We will then multiply this probability with the expected return for each player and add to the $value$ array accordingly. Players who have dropped will have a return of $-1 + (h - 1) = h - 2$. Players who have lost will have a return of $-1 - n + (h - 1) = h - n - 2$. Players who contended in fair play will have an expected return of $(-1 + n + h - 1) \cdot \frac{1}{f} + (-1 - n + h - 1) \cdot \frac{f - 1}{f}$. \newline

6) Repeat this process with each possible number of players who drop and each possible subset of players who drop. Then, restore the original order of the player strategies given in the $value$ array.\newline

This algorithm has a runtime of $\Theta(n3^n)$ assuming constant runtime for basic operations.

\subsubsection{General $\beta$-function}
We also developed a function that can compute $\beta(p_1, ..., p_n)$ for any $n$. The algorithm for this function is as follows.\newline

1) The number of players who drop $d$ is determined. The number of players who hold is $h$. Initialize $\beta = 0$.\newline 

2) A subset $D$ of the players of size $d$ is then chosen from $n\choose d$ possible subsets to represent the players who drop. The complement $H$ of size $h$ is then the subset of players who hold.\newline

3) The probability of this scenario is then 
\begin{align*}
    probability = {\displaystyle \prod_{p \in D} p \prod_{p \in H} (1 - p)}
\end{align*}. Update $\beta = \beta + probability \cdot (h - 1)$ unless $h = 0$, in which case, $\beta = \beta + probability \cdot 1$.\newline 

4) Repeat this process with each possible number of players who drop and each possible subset of players who drop. \newline

This algorithm has a runtime of $\Theta(n2^n)$.

\subsubsection{Payoff Matrices for $n$-player Guts}

To make player 1's payoff matrix for $n$-player Guts game where player 1 is against a coalition of players 2-$n$, we used a special indexing scheme to represent the strategies of the coalition. We created an $N \times N^{n - 1}$ matrix where $N$ is our discretization of Guts and the column $j_1 + j_2 \cdot N + j_3 \cdot N^2 + ... j_{n-1} \cdot N^{n-2}$ will represent the scenario where player $i$ in the coalition plays the strategy $j_i$. The strategy $i$ played by player $p_i$ of the coalition can be retrieved from the column $x$ by taking $\lfloor (x \: mod \: N^i) / N^{i-1} \rfloor$. 
\subsubsection{Pseudo-Bloc Modifications}

In the pseudo-bloc variation of Guts in which we are concerned with only the payoff of player 1, the $\alpha$ function algorithm can be simplified as players 3-$n$ play the same strategies. In a normal player 1 vs players 2-$n$ Guts game, we have to account for all possible combinations of players dropping and holding. However, in pseudo-bloc Guts, only the number of players dropping and holding in the bloc of players 3-$n$ is relevant as all of them are playing the same strategies. This allows us to reduce the runtime complexity of the $\alpha$ function to just ${O}(n^2)$.

\subsection{Numerical fixed point iteration}\label{nit}
	To determine the value of the generalized recursive game, we fill each player's matrix $\alpha +\beta V_0$ for each players $\alpha$, $\beta$, and $V_0$. $V_0$ is some anticipated value of the game. A logical starting point is 
	$-1$ for a single player and $-N$ for a coalition of $N$ players in the $1$ v. $N$ coalition case. This is a ``worst case'' that we know a player can guarantee, since any player can play a strategy of 1 (always fold) to guarantee this value. We can instead start each player at $V_0=0$ to guarantee the game is 0-sum (and thus fictitious play will converge in the $1$ v. $N$ setting).\\
	 
	After generating this first matrix, we can run fictitious play on this matrix to determine each player's value $V_1$. We can repeat this process using each $V_n$ to compute $V_{n+1}$, and repeat this process until the player;s $V_n$ converge to some $V^*$. This is a fixed point: a value where $V^*=Value(\alpha +\beta V^*)$. If this fixed point is unique, it is the value of the game. This value iteration will be monotonic in the zero-sum setting, so it will either converge to a fixed point or ``overshoot'' and diverge to infinity. As we have shown earlier, generically
	overshoot does not occur.
	
\section{Numerical results}\label{s:numres}
Finally, we describe our numerical results.
In the $1$ v. $2$ and $1$ v. $3$ player games,
we determined the optimal strategy for the coalition, which guarantees advantage against player 1. 
This strategy has a joint value of approximately 0.013 for players 2 and 3, 
and -0.013 for player 1. 
Here, players 2 and 3 play a bloc strategy of 0.68 around 86$\%$ of the time.
In the remaining 14$\%$ of the time, 
player 2 plays a strategy of 0 while player 3 plays a strategy of 0.86. 
Player 1's best response to this mixed strategy, a strategy of 0.64, loses to both. These results were created with a discretization of 101, meaning that there are 101 possible strategies from 0 to 1. 
The payoffs of the component strategies, denoted A and B, are depicted graphically in Figure \ref{figAB}
and that of the corresponding blended strategy in Figure \ref{negativefig}, below.

  \begin{figure}
		  \includegraphics[scale=0.55]{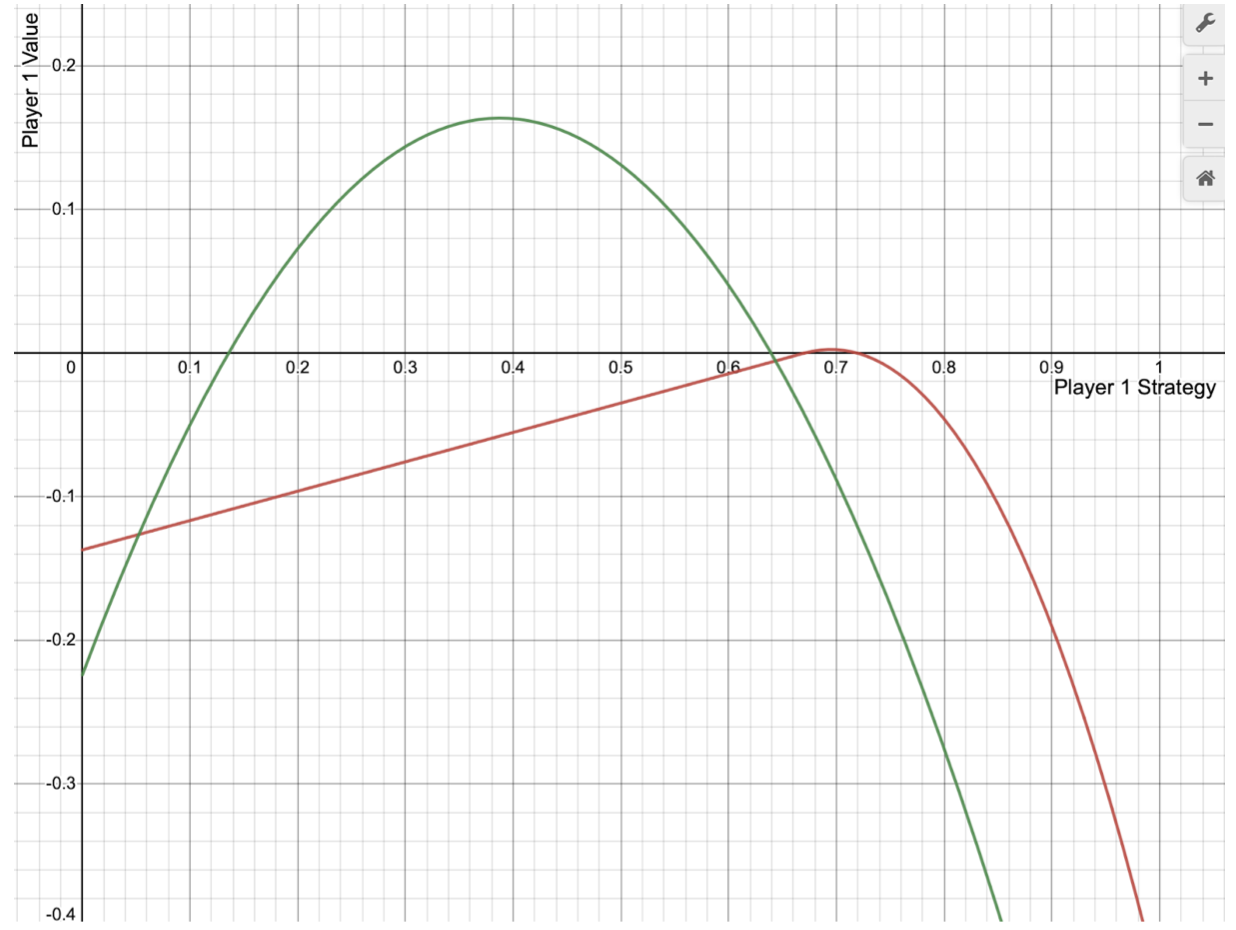}
\caption{Total expected return vs. $p_1^*$ for player 2-3 strategies A and B. }
	\label{figAB}
  \end{figure}

  \begin{figure}
		  \includegraphics[scale=0.35]{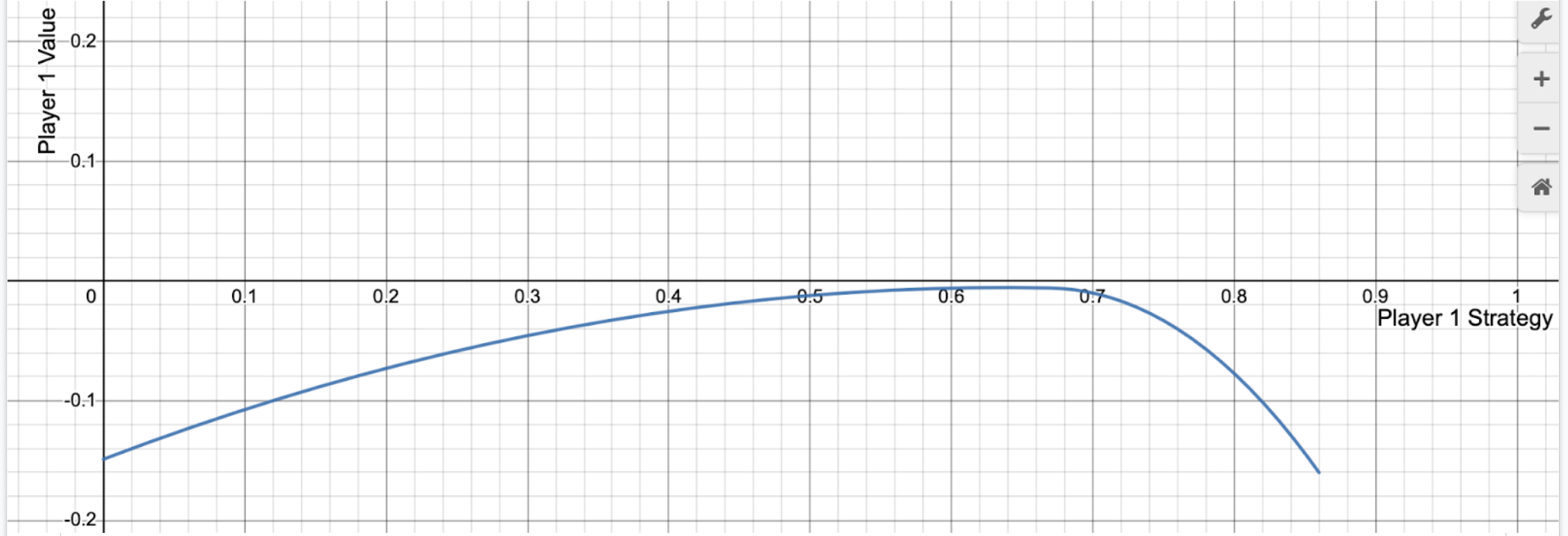}
\caption{Total expected return vs. $p_1^*$ for blended player 2-3 strategy A/B. }
	\label{negativefig}
  \end{figure}

We next computed the optimal strategies for player 1 vs. a coalition of $N$ players, with $N=$3,4, and 5.
The discretizations for these experiments was $M=21$.
	In spite of the low resolution in the discretization of strategies, a trend emerged where the coalition would randomly select either a ``bloc'' strategy where every member of the coalition would adopt the same strategy or 
a ``pseudo-bloc'' strategy where a single member of the coalition, without loss of generality player 2,
would always hold and the rest of the coalition would adopt a conservative bloc strategy with each holding
at the same threshold.
With this mixed strategy for the coalition, player one cannot force a positive return and the best they can do is to play more conservatively (than for the Nash equilibrium strategy of $p_*=1/2^{1/n}$ \cite{CCZ}) 
in order to minimize losses.
The reason for the low discretization is that treating each member of the coalition independently costs a significant amount of computational power with asymptotic run time bounded by $O(M^N I+M^N N 3^N)$, 
where $I$ denotes the number of iterations in one round of Fictitious Play, with physical run times ranging from 
minutes for $N=2$ to in excess of an hour for $N=5$.
The main reason for this is that modeling each of the $N$ player's strategy combinations is quite
computationally intensive, requiring a matrix of size $M^{N+1}$ for a discretization of $M$.

Motivated by the observed trend that the optimal coalition appears in practice to be of pseudo-bloc type,
we reduced complexity by substituting for the full game a reduced game in which the coalition could only use bloc or pseudo-bloc strategies. 
Computationally, this is significantly cheaper as now the run time is bounded by $O(M^3 I+M^3N^2)$. 
With these computational savings, we were able to explore the strategies for both player one and the coalition of $N$ players for $N$ between 2 and 15 with a discretization of $M=101$. We verified that the restrictions we placed on the coalition's strategies in the more optimized algorithm correctly reproduced the results from the independent actor method for $N=2,3,4,5$ as a crucial test. Using SciPy, we fitted a rational function to the values forceable by the coalition for $N$ between 2 and 15. 
The resulting best fit was
$$
\underline {V}(n) = 0.163 - 0.84/(n - 3.6)
$$
with an excellent $r$ squared value of  $\approx 0.9991$, and   
a predicted asymptote as $N\to\infty$ of ~0.16.
Compared to blackjack where the house edge is ~0.02; one sees that the coalition in continuous Guts has a much higher expected gain.

	Results are recorded in Table \ref{tab:table} showing, for each coalition size $N$, the value of the game from the coalitions, Player 1's best response against the optimal strategy, the coalition's bloc strategy, used a majority of the time, and the strategy played by the pseudo-bloc while player 2 plays a strategy of 0. This pseudo-bloc strategy is played around 10-30 percent of the time, increasing with the player count, however these results were noisy and thus not included. A graphical display, together with the optimized curve fit, is given in Figure \ref{fig_curvefit} below.

	\bigskip
	\begin{table}[h]
	\caption{Numerical results for the pseudo-bloc case} 
	\begin{tabular}{l | l | l | l | l}
		$N$ & Opponent Value & Player 1 Strategy & Bloc Strategy & Pseudo-bloc Strategy\\
		\hline
		1   & 0.0            & 0.5               & 0.5           & -\\
		2   & 0.0132         & 0.64              & 0.68          & 0.86\\
		3   & 0.0339         & 0.72              & 0.76          & 0.89\\
		4   & 0.0516         & 0.77              & 0.81          & 0.91\\
		5   & 0.0654         & 0.81              & 0.84          & 0.93\\
		6   & 0.0753         & 0.84              & 0.87          & 0.94\\
		7   & 0.0847         & 0.86              & 0.88          & 0.94\\
		8   & 0.0909         & 0.87              & 0.89          & 0.95\\
		9   & 0.0954         & 0.89              & 0.91          & 0.95\\
		10  & 0.1007         & 0.89              & 0.91          & 0.96\\
		11  & 0.1066         & 0.91              & 0.92          & 0.96\\
		12  & 0.1074         & 0.92              & 0.93          & 0.97\\
		13  & 0.1110         & 0.92              & 0.93          & 0.97\\
		14  & 0.1154         & 0.92              & 0.94          & 0.97\\
		15  & 0.1184         & 0.93              & 0.94          & 0.97
	\end{tabular}
	\label{tab:table}	
	\end{table}
	\medskip

  \begin{figure}
		  \includegraphics[scale=0.65]{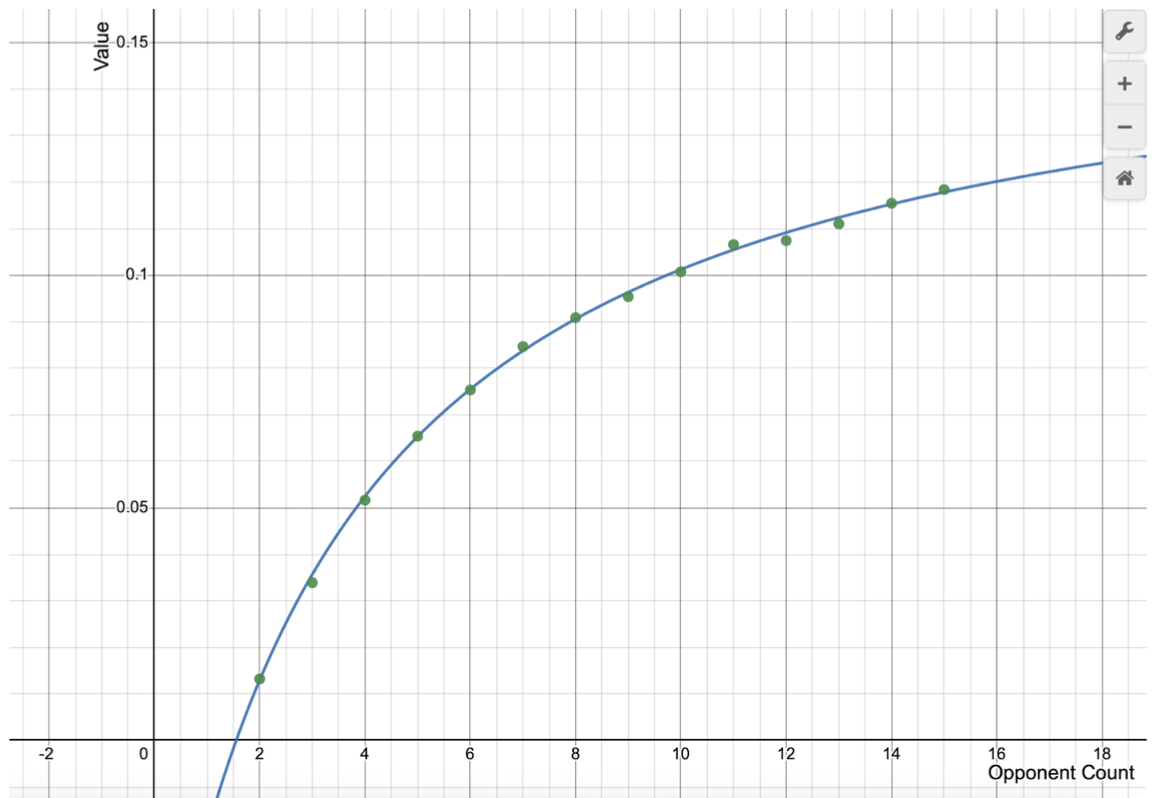}
\caption{Curve fit by rational function of total return for pseudo-bloc strategy, $1$ v. $n$. }
	\label{fig_curvefit}
  \end{figure}

 An interesting variation was the $2$ v. $2$ case, which, due to high computational complexity, 
 was carried out with relatively low discretization $M=21$.
 For this experiment, the optimal values turned out to be $\underline{V}=\overline{V}=0$, so 
 that the game has a value of $V=0$ in the strong minimax sense of von Neumann \cite{vN}.
 The optimal strategies for the two coalitions were identical and somewhat similar to the optimal
 strategies for players $2$-$3$ in the $1$ v. $2$ game.
 Namely, players pursue a bloc strategy with thresholds of $.75$-$.8$ with probability 
 $ \approx 0.95$, and non-bloc strategy with thresholds $(0, 0.95)$ with probability $0.05$.

 \subsection{The marauding guts-bot and performance}
 The optimal pseudo-bloc strategies found for a player $2$-$(n+1)$ coalition were coded into an interactive game 
 with the user playing the role of player $1$ in a $1$ v. $n$ game of continuous guts, discretized at
 101 mesh points.
 The game was tested by authors and some fellow REU students. High scores were 16 (Jay) and 26 (Jacob) for a round starting with 1 unit ante.
 Interesting informal conjectures were that even though the bot is in the long run unbeatable, certain aggressive
 or erratic patterns of play seems to increase the chances of a high score in the short term, perhaps by
 increasing variance in the outcome.
 This, and other real-world experiments on actual play and play-patterns would 
 be a very interesting direction for further investigation.

 \subsection{Three-player Fictitious Play}
	Finally, we experimented with various aspects of Fictitious Play. We saw that for 3 player free-for-all guts, the Fictitious Play algorithm converged to the Nash equilibrium. Curiously, the Fictitious Play algorithm did \emph{not} converge to the coalition strategy. The reason for this is that in the coalition, the bloc strategy does not beat the equilibrium strategy and the pseudo-bloc strategy does not allow for both players two and three to benefit while simultaneously causing player one to lose. In fact, \emph{no} strategy allows player two to benefit, player three to be at least neutral and causes player one to lose.

	More precisely, in the $1$ v. $2$ case, the 2 coalition players are incentivized to break and go to the equilibrium strategy. When one player is playing a strategy of 0, the other coalition member is incentivized to play a strategy close to the 3-player equilibrium strategy of $\frac{1}{\sqrt{2}}$, which will then also trigger the other coalition member to switch to a similar strategy. These players will be incentivized to play a strategy approaching the 3-player equilibrium strategy of $\frac{1}{\sqrt{2}}$. Through numerical experiments, we determine that no bloc strategy gains advantage against player 1 playing $\frac{1}{\sqrt{2}}$, if player 1 and player 2 are playing $\frac{1}{\sqrt{2}}$, no strategy for player 3 gains advantage, and there is no pair of strategies $S_2$, $S_3$, where if player 1 plays $\frac{1}{\sqrt{2}}$, player 2 plays $S_2$, and player 3 plays $S_3$, player 1 loses while both players 2 and 3 are at least neutral. 
	
	If two non-communicating players attempt to team up against player 1, they will be unable to meaningfully gain advantage, as even if both players play for the team and do not betray each other, players 2 and 3 each playing $\frac{1}{\sqrt{2}}$ is the only pure equilibrium, so coalitions are unlikely to arrive naturally in 3 player play. This equilibrium of $\frac{1}{\sqrt{2}}$ was observed in 3 player Fictitious Play, even when players 2 and 3 played for the team and not themselves.
	
%%%%%%%%%%%%%%%%%%%%%%%%%% APPENDIX %%%%%%%%%%%%%%%%%%%%%%%%%%%%%%%%

\appendix

\section{Computational complexity}\label{s:complexity}
When experimenting with the $1$ v. $n$ case, we found that the most time-consuming portion of the calculation was the construction of the $A$ matrix.  However, we also found that in the cases we could run, 
the equilibrium was always a ``pseudo-bloc'' case, as previously described.  
In order to save computation, we thus restricted the coalition's allowed strategies to this pseudo-bloc type, 
in which players $3$-$(n+1)$ were required to play a common strategy at all times.
This greatly reduced the time necessary to calculate $A$.  Not only did this require populating a much smaller matrix (effectively populating an array of one less dimension), but the defining function for the matrix was also 
reduced from $n 3^n$ to $n^2$ cases.  This allowed us to efficiently run up to coalition size $n=15$. 
To give an idea of the run time, working on a Mac powerbook, with discretization of 101 mesh points,
it took approximately 1 minute (real time) to compute the $1$ v. $2$ solution, and approximately 
one hour (real time) to compute the $1$ v. $15$ solution under pseudo-bloc play.

In order to further improve run time, the most efficient means would be to parallelize the definition of the $A$ matrix.  If this were parallelized and run on a GPU, where many cores allow for very fast processing of parallelized operations, the runtime would be cut by a large factor.  Rewriting the program in another language could also improve runtime.  The results presented were achieved using python for ease of use.  If Fortran or C++ were implemented the results would likely be improved.
In short, the applications carried out here did not press much the limits of computation time, leaving plenty of
room  for improvement in more complicated scenarios.

\section{The Weenie rule}\label{s:weenie}
A variant on Guts Poker is the addition of the ``Weenie rule'' \cite{S} penalizing overcautious play.
Under this rule, should all players drop, the player with highest hand- the ``weenie''- must match the
pot, thus doubling the stakes for the next round.  
It was shown in \cite{CCZ} that the optimal bloc strategy $p_1^*=1/2^{1/(n-2)}$
for standard guts is shifted under the Weenie rule to the more agressive strategy $p_1^*=1/3^{1/(n-1)}$,
as recorded in the following proposition.

\begin{proposition}[\cite{CCZ}]\label{wNE}
For $n$-player continuous Guts with the Weenie rule, the unique symmetric Nash equilibrium is
	$
	(1/3^{1/(n-1)}, \dots, 1/3^{1/(n-1)}),
	$
	and this is optimal for the bloc strategy game.
	Different from the standard case, it is a \emph{ strict} Nash equilibrium.
\end{proposition}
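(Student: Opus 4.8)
The plan is to follow the argument behind Proposition~\ref{opt}, adjusting only for the single change the Weenie rule makes to the one-shot data of Section~\ref{s:payoff}: the ``all players drop'' event no longer restarts the game unchanged, but instead the highest hand (the ``weenie'') matches the pot, so this event now contributes immediate return $1-n$ to the weenie and $+1$ to every other player, and multiplies the stakes by $2$ rather than by $1$; every other event is unchanged. We then have three things to establish: that $(\gamma,\dots,\gamma)$ with $\gamma:=1/3^{1/(n-1)}$ is a symmetric Nash equilibrium of the one-shot game $\alpha$; that it is the unique symmetric one and is \emph{strict}; and that $\gamma$ forces $\underline V=0$ in the bloc version of the full game.

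For the first two, fix players $2$-$n$ at a common threshold $\gamma$, so the number $K$ of them that hold is $\mathrm{Binomial}(n-1,\,1-\gamma)$, and write $\alpha(p):=\alpha(p,\gamma,\dots,\gamma)$ for player $1$'s one-shot return as a function of his own threshold $p$. Since $\alpha(p)=\int_0^p D(h)\,dh+\int_p^1 H(h)\,dh$, with $D(h)$ and $H(h)$ the expected returns to a dropped, resp.\ held, hand of value $h$, we have $\alpha'(p)=D(p)-H(p)$, and $D$, $H$ are elementary binomial sums. For $p\le\gamma$ a held hand $p$ loses to every holder so $H$ is constant there, while $D(p)=3\gamma^{n-1}+(n-1)(1-\gamma)-2-n\,p^{n-1}$, the $-n\,p^{n-1}$ being the weenie penalty; for $p\ge\gamma$ the held hand $p$ beats the holders lying in $(\gamma,p)$, giving $H(p)=(n-1)(1-\gamma)-1-n+2n\,p^{n-1}$ while now $D$ is constant. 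A short computation then gives
\[
\alpha'(p)=(3-2n)\gamma^{n-1}+(n-1)-n\,p^{n-1}\ \ (p\le\gamma),\qquad
\alpha'(p)=(3-n)\gamma^{n-1}+(n-1)-2n\,p^{n-1}\ \ (p\ge\gamma),
\]
and $\alpha'(\gamma)=0$ forces $3(n-1)\gamma^{n-1}=n-1$, i.e.\ $\gamma^{n-1}=\tfrac13$, which is exactly $\gamma=1/3^{1/(n-1)}$. Feeding $\gamma^{n-1}=\tfrac13$ back in, $\alpha'(p)=n(\tfrac13-p^{n-1})$ on $[0,\gamma]$ and $\alpha'(p)=2n(\tfrac13-p^{n-1})$ on $[\gamma,1]$, so $\alpha$ is strictly increasing on $[0,\gamma]$ and strictly decreasing on $[\gamma,1]$, with $\alpha(\gamma)=0$ since a symmetric profile in a zero-sum game has value $0$. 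Hence $\alpha(p)<0$ for all $p\ne\gamma$: the profile is a strict symmetric equilibrium, and it is the only symmetric one, since any symmetric equilibrium threshold either obeys the first-order condition above (hence equals $\gamma$) or is $0$ or $1$, neither of which is an equilibrium.

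For optimality in the bloc game the argument is as in \cite{CCZ}: with player $1$ committed to $\gamma$, the reduced game has payoffs $\alpha(\gamma,q,\dots,q)+\beta(\gamma,q,\dots,q)V$ as the opponents range over bloc thresholds $q$. One shows $\alpha(\gamma,q,\dots,q)\ge0$ for every such $q$ --- for $n=2$ this is $\alpha(\gamma,q)=-\alpha(q,\gamma)\ge0$ by antisymmetry together with the display above, and for general $n$ it comes from the explicit form of the bloc payoff function, extending \cite{CCZ} --- and, with the accompanying bound on $\beta$ near this strategy, that the hypotheses of Theorem~\ref{ntthm} hold with $v_*=-t_1$ and $v^*=0$. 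Then repeated play of $\gamma$ from $V_0=-t_1$ drives the iteration \eqref{Vn} monotonically up to $0$, so $\underline V=0$; and since the opponents may themselves play $\gamma$, holding player $1$ to $\alpha(\,\cdot\,,\gamma,\dots,\gamma)\le0$, one also gets $\overline V=0$, so the bloc game has value $0$ with $\gamma$ optimal on both sides.

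The step I expect to be the real obstacle is precisely the inequality $\alpha(\gamma,q,\dots,q)\ge0$ (with its companion $\beta$-estimate) for $n\ge4$: exactly as for Proposition~\ref{opt}, there is no closed form for the $n$-player bloc payoff when the opponents play an arbitrary $q\ne\gamma$, so this has to be drawn from the structural/partial information about that function developed in \cite{CCZ} rather than from a direct symbolic computation. By contrast, pinning down $\gamma=1/3^{1/(n-1)}$ and proving strictness only involve one deviator against a homogeneous bloc, and so reduce cleanly, for every $n$, to the binomial calculation sketched above.
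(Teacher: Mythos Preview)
The paper does not prove this proposition: it is stated with attribution to \cite{CCZ} and no argument is supplied here, so there is no ``paper's own proof'' to compare against. That said, your approach is sound and the explicit part of it is correct.

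Your derivative decomposition $\alpha'(p)=D(p)-H(p)$ is a clean way to handle the single-deviator problem, and your formulas check out: with opponents at a common threshold $q$, the symmetric first-order condition $\alpha'(q)=0$ reduces to $(n-1)(1-3q^{n-1})=0$, pinning down $q=\gamma=1/3^{1/(n-1)}$; substituting $\gamma^{n-1}=1/3$ back gives $\alpha'(p)=n(1/3-p^{n-1})$ on $[0,\gamma]$ and $\alpha'(p)=2n(1/3-p^{n-1})$ on $[\gamma,1]$, so $\alpha$ has a strict global maximum at $p=\gamma$ with value $0$. This establishes both existence and strictness of the symmetric equilibrium. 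For uniqueness, your phrasing is slightly loose---the ``first-order condition above'' was derived only for opponents at $\gamma$---but the fix is immediate: the same calculation with $\gamma$ replaced by a generic $q$ gives $\alpha'(q)=(n-1)(1-3q^{n-1})$, so any interior symmetric equilibrium must satisfy $q^{n-1}=1/3$, and the endpoints $q=0,1$ are easily ruled out.

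For the bloc-game claim you correctly isolate the crux, namely $\alpha(\gamma,q,\dots,q)\ge0$ for all $q$, and you are right that for $n=2$ this follows from antisymmetry of the two-player symmetric zero-sum payoff together with the strictness result. You are also honest that for $n\ge3$ this step (and the companion estimate $\alpha-\beta>-1$ needed for the $v_*=-t_1$ hypothesis of Theorem~\ref{ntthm}) must be drawn from the bloc-payoff analysis in \cite{CCZ} rather than from a short symbolic computation. Since the present paper itself defers this entire proposition to \cite{CCZ}, that is the appropriate place to leave it.
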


Here, we go further, investigating the general (nonbloc) case.
The surprising result is that in the case of the Weenie rule, quite differently than in the standard case,
the explicit Nash equilibrium strategy appears to be optimal not only against bloc strategies, but against
general nonbloc coalition strategies of players $2$-$n$.
That is, continuous Guts with the Weenie rule appears to be a rare example of a real-world multi-player
game with an exact solution: a simple pure strategy of threshold type.

\subsection{Payoff for $n=2,3$}\label{s:wpayoff}
We start by extending the following result of \cite{CCZ} to the case $n=3$.

\begin{proposition}[\cite{CCZ}]\label{2wpayprop}
For $2$-player continuous Guts with the Weenie rule, 
	\be\label{2walpha}
	\alpha(p_1^*,p_2^*)=
	\begin{cases}
		(1-2p_1^*-p_2^*)(p_1^*-p_2^*) & p_2^*\leq p_1^*,\\
		(1-2p_2^*-p_1^*)(p_1^*-p_2^*) & p_2^*> p_1^*.
	\end{cases}
	\ee
\end{proposition}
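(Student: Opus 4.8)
The plan is to obtain the Weenie-rule payoff as a correction to the standard two-player payoff of Proposition \ref{2payprop}. The key observation is that the Weenie rule changes the dynamics of a single round \emph{only} in the event that both players drop: if at least one player holds, the round terminates (one holder) or proceeds to a higher-stakes round (both hold) exactly as in standard Guts, with the same immediate returns. So I would write $\alpha^{\mathrm{W}}(p_1^*,p_2^*)=\alpha(p_1^*,p_2^*)+\Delta(p_1^*,p_2^*)$, where $\alpha$ is the standard payoff \eqref{2alpha} and $\Delta$ is the expected immediate return to player $1$ contributed by the all-drop event under the Weenie rule --- an event which contributes $0$ in the standard game, being merely a redeal (its effect there being carried entirely by the stakes multiplier $\beta$).

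To compute $\Delta$, I would follow the ``ante-paid-on-termination'' bookkeeping of Section \ref{s:payoff}. Conditioned on both players dropping, the hands $p_1,p_2$ are independent and uniform on $[0,p_1^*]$ and $[0,p_2^*]$ respectively; the ``weenie'' is whichever player has the higher hand. Under the rule this player matches the pot (of size $2$), so the pot becomes $4$, the stakes double, and --- just as in the case of Section \ref{s:payoff} where three players hold and the pot doubles --- the virtual-ante adjustment assigns immediate return $-1$ to the weenie and $+1$ to the other player (sum zero, as it must be). Hence, conditioned on the all-drop event, the expected immediate return to player $1$ equals $1-2\,\mathbb{P}(\text{player }1\text{ is the weenie})$, and $\Delta=p_1^*p_2^*\bigl(1-2\,\mathbb{P}(p_1>p_2\mid\text{both drop})\bigr)$.

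The remaining work is the one elementary probability $\mathbb{P}(p_1>p_2\mid\text{both drop})$, a single one-variable integral: it equals $1-p_2^*/(2p_1^*)$ when $p_2^*\le p_1^*$ and $p_1^*/(2p_2^*)$ when $p_1^*<p_2^*$. Substituting gives $\Delta=(p_2^*)^2-p_1^*p_2^*$ in the first case and $\Delta=p_1^*p_2^*-(p_1^*)^2$ in the second. Adding $\Delta$ to the corresponding branch of \eqref{2alpha} and refactoring, using $(1-2p_1^*)(p_1^*-p_2^*)+(p_2^*)^2-p_1^*p_2^*=(1-2p_1^*-p_2^*)(p_1^*-p_2^*)$ together with the symmetric identity for the other branch, yields exactly \eqref{2walpha}.

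I do not expect a genuine obstacle: the only step needing care is the correct use of the virtual-ante convention for the new all-drop event, i.e.\ confirming that the weenie's immediate return is $-1$ (not $-2$) and the other player's is $+1$ (not $0$), which is the same accounting that turns ``$+n$ winner / $-n$ loser'' into the three-player values in Section \ref{s:payoff}. As an alternative to the correction-term route, one could rederive $\alpha^{\mathrm{W}}$ outright via the $n=2$ specialization of the case enumeration in Section \ref{npayoff} with the all-drop case replaced, but the approach above is considerably shorter.
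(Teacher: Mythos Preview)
Your argument is correct. The paper does not itself prove this $n=2$ statement---it is simply quoted from \cite{CCZ}---but the correction-term method you use is precisely the one the paper employs in its own proof of the $n=3$ extension (Proposition~\ref{wthm}): compute the contribution of the all-drop event under the virtual-ante convention and add it to the standard payoff \eqref{2alpha}/\eqref{3alpha}. Your identification of the weenie's immediate return as $-1$ (and the other player's as $+1$) is the right reading of the Section~\ref{s:payoff} accounting, and your probability and algebra check out.
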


\begin{proposition}\label{wthm}
	For $3$-player continuous guts with the Weenie rule, $\beta$ is as for standard Guts, while
	the payoff for player $1$ is $\alpha(p_1^*,p_2^*,p_3^*)=$
	\ba\label{3walpha}
	\begin{cases}
		%%%%%%%%%%%%%% case I %%%%%%%%%%%%%%%%5
		2p_1^*-p_2^*-p_3^*+(p_3^*)^3 -(p_1^*)^3 +3(p_2^*)^2p_3^*-3p_1^*p_2^*p_3^* , &
		p_1^*<p_2^*<p_3^*,\\
		%%%%%%%%%%%%%%% case Ib %%%%%%%%%%%%%%%%%%
		2p_1^*-p_2^*-p_3^*+(p_2^*)^3 -(p_1^*)^3 +3(p_3^*)^2p_2^*-3p_1^*p_2^*p_3^* , &
		p_1^*<p_3^*<p_2^*,\\
		%%%%%%%%%%%%%%% case II %%%%%%%%%%%%%%%%%%
		2p_1^*-p_2^*-p_3^*+(p_2^*)^3/2 + (p_3^*)^3 - 3 (p_1^*)^2p_2^*/2 
		- 3 (p_1^*)^2p_3^* + 3p_1^*p_2^*p_3^* , &
		p_2^*<p_1^*<p_3^*\\
		%%%%%%%%%%%%%% case IIb %%%%%%%%%%%%%%%%%%%
		2p_1^*-p_3^*-p_2^*+(p_3^*)^3/2 
		+ (p_2^*)^3 - 3 (p_1^*)^2p_3^*/2 
		- 3 (p_1^*)^2p_2^* + 3p_1^*p_2^*p_3^* , &
		p_3^*<p_1^*<p_2^*\\
		%%%%%%%%%%%%% Case III %%%%%%%%%%%%%%%%%%%%
		2p_1^*-p_3^*-p_2^* - 2(p_1^*)^3 + (p_2^*)^3/2 +3 p_2^* (p_3^*)^2/2, &
		p_2^*<p_3^*<p_1^*,\\
		%%%%%%%%%%%%%% Case IIIb %%%%%%%%%%%%%%%%%%%
		2p_1^*-p_3^*-p_3^* - 2(p_1^*)^3 + (p_3^*)^3/2 +3 p_3^* (p_2^*)^2/2, &
		p_3^*<p_2^*<p_1^*.\\
		%%%%%%%%%%%%%%%%%%%%%%%%%%%%%%%%%
	\end{cases}
	\ea
\end{proposition}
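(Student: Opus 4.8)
The plan is to reduce everything to one new computation sitting on top of the known three‑player formulas. Continuous Guts with the Weenie rule differs from ordinary Guts \emph{only} in the all‑drop event, so the player‑1 payoff splits as $\alpha = \alpha_{\mathrm{std}} + \Delta$, where $\alpha_{\mathrm{std}}$ is the standard three‑player payoff \eqref{3alpha} of Proposition \ref{3prop} and $\Delta$ is the part of player 1's expected immediate return coming from the modified all‑drop event; the stakes multiplier $\beta$ is computed by the same event‑sum as in \cite{CCZ}, so I focus on the new content. Thus I would first fix notation ($a=p_1^*$, $b=p_2^*$, $c=p_3^*$; player $i$ holds iff the uniform hand value satisfies $p_i>p_i^*$), quote $\alpha_{\mathrm{std}}$ and $\beta_{\mathrm{std}}$ from Proposition \ref{3prop}, and then concentrate on $\Delta$.

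Next I would pin down, via the ante‑on‑termination bookkeeping of Section \ref{s:payoff}, player 1's immediate return in the all‑drop event under the Weenie rule. In that event all three players drop, the stakes double, and the ``weenie'' (the player with the highest hand among the three) effectively supplies the pot increase; pushing the convention of Section \ref{s:payoff} through — every player credited one unit for the doubled virtual ante, the weenie debited the $n=3$ units it puts in — gives immediate return $+1$ to player 1 when player 1 is not the weenie and $1-n=-2$ when player 1 is the weenie, which one checks is zero‑sum. Hence, writing the weenie condition as $p_1=\max(p_1,p_2,p_3)$ on the box $\{p_1\le a,\ p_2\le b,\ p_3\le c\}$,
\[ \Delta \;=\; \Pr(\text{all drop}) \;-\; n\cdot \Pr\bigl(\text{all drop and } p_1 \text{ is the weenie}\bigr) \;=\; abc \;-\; 3\int_0^{a}\min(t,b)\,\min(t,c)\,dt . \]

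The remaining work is the elementary evaluation of $\int_0^a \min(t,b)\min(t,c)\,dt$, a piecewise cubic whose form depends only on where $a$ sits relative to $b$ and $c$; it therefore splits exactly along the six orderings of $p_1^*,p_2^*,p_3^*$ appearing in \eqref{3walpha} (for instance $a=\min$ gives $a^3/3$, so $\Delta=abc-a^3$, which accounts for both the $-(p_1^*)^3$ term and the change $-4abc\mapsto -3abc$ in the first two cases). Adding $\Delta$ to $\alpha_{\mathrm{std}}$ region by region and collecting terms yields \eqref{3walpha}. I would then run three cross‑checks: continuity of $\alpha$ across the walls $\{a=b\}$, $\{b=c\}$, $\{a=c\}$; agreement on the bloc diagonal $b=c$ with the bloc payoff whose critical point is the strict Nash equilibrium $p_1^*=1/3^{1/2}$ of Proposition \ref{wNE}; and that $\Delta\equiv 0$ once the Weenie rule is switched off, recovering Proposition \ref{3prop}.

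The only genuine hazard is the middle step — getting the all‑drop payoff right. The bookkeeping of Section \ref{s:payoff} is stated for ``$r$ players hold'' events, and the Weenie all‑drop event must be fitted into the same scheme with the correct split between the uniform $+1$ virtual‑ante credit and the weenie's debit; a sign error, or a mistake about who pays what, would propagate into all six formulas. Everything afterward — the piecewise integration and the term collection — is routine, and an independent brute‑force enumeration over all $2^3$ hold/drop patterns (together with the hand‑orderings of the holders, and of all three players in the all‑drop case) provides a convenient end‑to‑end check.
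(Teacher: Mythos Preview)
Your proposal is correct and follows essentially the same approach as the paper: both decompose $\alpha=\alpha_{\mathrm{std}}+\Delta$ with $\Delta$ the all-drop correction, identify the Weenie payoffs as $+1$ to non-weenies and $-2$ to the weenie via the ante-on-termination bookkeeping, compute $\Delta$ in the three orderings with $p_2^*\le p_3^*$, and obtain the remaining cases by the $p_2^*\leftrightarrow p_3^*$ symmetry. Your compact formula $\Delta=abc-3\int_0^a\min(t,b)\min(t,c)\,dt$ is a slightly cleaner packaging of the paper's explicit sub-case enumeration within the all-drop box, but the underlying computation is identical.
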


\begin{proof}
	We first compute the correction to the standard guts payoff in the case that all players drop.

\textbf{Case 1}: $p_1^* < p_2^* < p_3^*$

Case i) $p_1, p_2, p_3 \leq p_1^*$, outcome zero.

Case ii) $p_1, p_3 \leq p_1^*; p_2 \in [p_1^*, p_2^*]$, outcome $1 \cdot (p_1^*)^2(p_2^* - p_1^*)$.

Case iii) $p_1, p_2 \leq p_1^*; p_3 \in [p_1^*, p_2^*]$, outcome $1 \cdot (p_1^*)^2(p_2^* - p_1^*)$

Case iv) $p_1 \leq p_1^*; p_2,p_3 \in [p_1^*, p_2^*]$, outcome $1 \cdot p_1^*(p_2^* - p_1^*)^2$

Case v) $p_1, p_2 \leq p_1^*; p_3 \in [p_2^*, p_3^*]$, outcome $1 \cdot (p_1^*)^2(p_3^*-p_2^*)$

Case vi) $p_1 \leq p_1^*; p_2 \in [p_1^*, p_2^*]; p_3 \in [p_2^*, p_3^*]$, outcome $1 \cdot p_1^*(p_2^* - p_1^*)(p_3^* - p_2^*)$

Total: $-(p_1^*)^3 + p_1^*p_2^*p_3^*$

\textbf{Case 2}: $p_2^* < p_1^* < p_3^*$

Case i), all fair play, outcome zero

Case ii) $p_2, p_1 \leq p_2^*; p_3 \in [p_2^*, p_1^*]$, outcome $1 \cdot (p_2^*)^2(p_1^* - p_2^*)$

Case iii) $p_2, p_1 \leq p_2^*; p_3 \in [p_1^*, p_3^*]$, outcome $1 \cdot (p_2^*)^2(p_3^* - p_1^*)$

Case iv) $p_2, p_3 \leq p_2^*; p_1 \in [p_2^*, p_1^*]$, outcome $-2 \cdot (p_2^*)^2(p_1^* - p_2^*)$

Case v) $p_2 \leq p_2^*; p_1, p_3 \in [p_2^*, p_1^*]$, outcome $-\frac{1}{2} \cdot p_2^*(p_1^* - p_2^*)^2$

Case vi) $p_2 \leq p_2^*; p_1 \in [p_2^*, p_1^*]; p_3 \in [p_1^*, p_3^*]$, outcome $1 \cdot p_2^*(p_1^*-p_2^*)(p_3^*-p_1^*)$

Total: $(p_2^*)^3/2 - \frac{3}{2}p_2^*(p_1^*)^2 + p_1^*p_2^*p_3^*$

\textbf{Case 3}: $p_2^* < p_3^* < p_1^*$.
Case i) fair play, outcome zero

Case ii) $p_2, p_3 \leq p_2^*; p_1 \in [p_2^*, p_3^*]$, outcome $-2 \cdot (p_2^*)^2(p_3^* - p_2^*)$

Case iii) $p_2, p_3 \leq p_2^*; p_1 \in [p_3^*, p_1^*]$, outcome $-2 \cdot (p_2^*)^2(p_1^* - p_3^*)$

Case iv) $p_2, p_1 \leq p_2^*; p_3 \in [p_2^*, p_3^*]$, outcome $1 \cdot (p_2^*)^2(p_3^* - p_2^*)$

Case v) $p_2 \leq p_2^*; p_1, p_3 \in [p_2^*, p_3^*]$, outcome $-\frac{1}{2}p_2^*(p_3^*-p_2^*)^2$

Case vi) $p_2 \leq p_2^*; p_3 \in [p_2^*, p_3^*]; p_1 \in[p_3^*, p_1^*]$, outcome $-2 \cdot p_2^*(p_3^* - p_2^*)(p_1^* - p_3^*)$

Total: $(p_2^*)^3/2 + \frac{3}{2}(p_2^*(p_3^*)^2) - 2p_1^*p_2^*p_3^*$

Summing with the standard payoffs given in \eqref{3alpha}, we obtain formulas (i), (iii), and (v)
of \eqref{3walpha}.  The remaining cases follow by invariance under permutation of $p_2^*$ and $p_3^*$.
\end{proof}

\subsection{Optimal strategy for $n=2,3$}\label{s:opt}

\begin{corollary}\label{weenieopt}
	For $2$- and $3$-player continuous Guts with the Weenie rule, the Nash equlibrium
	strategies $p_1^*=1/3$ and $p_1^*=1/\sqrt{3}$ are optimal against general (nonbloc) strategies,
	forcing $\underline{V}=0$.
\end{corollary}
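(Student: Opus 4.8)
The plan is to reduce the claim to a pair of finite-dimensional verifications on the discretized game, using the analytical machinery already developed. First I would recall that by Remark \ref{Jacobrmk} no bloc coalition of players $2$-$n$ can force a negative return against player 1, and by Proposition \ref{wNE} the strategy $p_1^*=1/3^{1/(n-1)}$ is the symmetric Nash equilibrium, optimal in the bloc game, so that $\alpha(p_1^*,\dots,p_1^*)=0$ and $\beta(p_1^*,\dots,p_1^*)\le 1$ there. The heart of the argument is then to show that \emph{no} coalition strategy — bloc or not — for players $2$-$n$ beats player 1's pure strategy $p_1^*$; equivalently, that the function
\[
\phi(p_2^*,\dots,p_n^*):=\alpha(p_1^*,p_2^*,\dots,p_n^*)+\bigl(\beta(p_1^*,p_2^*,\dots,p_n^*)-1\bigr)\,\underline V
\]
is $\ge \underline V=0$ for all pure coalition profiles, hence (by linearity in the mixing weights) for all mixed profiles. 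Since $\underline V=0$, this reduces to checking $\alpha(p_1^*,p_2^*,\dots,p_n^*)\ge 0$ for all $(p_2^*,\dots,p_n^*)\in[0,1]^{n-1}$, after which Theorem \ref{ntthm} (applied with $v_*=v^*=0$, using $\alpha+(\beta-1)V\ge 0$ at $V=0$) certifies that player 1 forces $\underline V\ge 0$, while the symmetric equilibrium forces $\overline V\le 0$, giving $\underline V=\overline V=0$.

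For $n=2$ this is immediate from Proposition \ref{2wpayprop}: with $p_1^*=1/3$, in the region $p_2^*\le p_1^*$ one has $\alpha=(1-2p_1^*-p_2^*)(p_1^*-p_2^*)=(1/3-p_2^*)(1/3-p_2^*)=(1/3-p_2^*)^2\ge 0$, and in the region $p_2^*>p_1^*$ one has $\alpha=(1-2p_2^*-p_1^*)(p_1^*-p_2^*)=(2/3-2p_2^*)(1/3-p_2^*)=2(1/3-p_2^*)^2\ge 0$; so $\alpha\ge 0$ everywhere with equality only at $p_2^*=1/3$. For $n=3$ one substitutes $p_1^*=1/\sqrt3$ into the six piecewise-polynomial branches of \eqref{3walpha} and checks nonnegativity of each resulting two-variable polynomial on the appropriate simplex-like region $\{p_2^*<p_3^*<p_1^*\}$, etc. The natural approach here is to show each branch is, up to the permutation symmetry in $(p_2^*,p_3^*)$, a sum of squares or otherwise manifestly nonnegative after completing the square in one variable; one expects the minimum on each region to occur either at the interior critical point $p_2^*=p_3^*=1/\sqrt3$ (the equilibrium, value $0$) or on the boundary, where the problem collapses to a one-variable polynomial inequality that can be verified directly. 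Combining the branch-by-branch bounds with the boundary matching gives $\alpha\ge 0$ globally, completing the $n=3$ case.

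The main obstacle I anticipate is the $n=3$ casework: the branch formulas in \eqref{3walpha} are cubic in each variable with mixed terms, so the sum-of-squares decomposition is not entirely obvious, and care is needed to confirm that the candidate minimizers are correctly identified (in particular that no interior critical point on a branch yields a negative value, and that continuity across the branch boundaries is respected). A secondary subtlety is the passage from pure to mixed coalition strategies: this is linear in the weights for \emph{fixed} player-1 strategy, so nonnegativity of $\phi$ at all vertices of the strategy simplex does extend to the whole simplex, but one should note explicitly that it is player 1's \emph{pure} response $p_1^*$ that is held fixed throughout, which is exactly what is needed since we only claim $p_1^*$ is optimal, not that it is the unique optimum. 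Once $\alpha(p_1^*,\cdot)\ge 0$ is established, the recursive-game conclusion $\underline V=\overline V=0$ follows formally from Theorem \ref{ntthm} together with Remark \ref{Jacobrmk} and Proposition \ref{wNE}, as indicated above.
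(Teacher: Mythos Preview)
Your reduction to showing $\alpha(p_1^*,\cdot)\ge 0$ on all pure coalition profiles is exactly the paper's strategy, and your $n=2$ computation is correct and in fact more explicit than the paper's (which simply observes that all $2$-player strategies are bloc). The difference lies in the $n=3$ execution. You hedge between a sum-of-squares decomposition and a critical-point/boundary analysis; the former does not work here, since the branch polynomials in \eqref{3walpha} are genuinely cubic with mixed terms and do not factor as sums of squares. The paper carries out the latter, but with a key observation your outline misses: in the two ``outer'' regions $p_1^*\le p_2^*\le p_3^*$ and $p_2^*\le p_3^*\le p_1^*$, the sign of $\partial\alpha/\partial p_2^*$ is definite, so the minimum in $p_2^*$ lies on the boundary $p_2^*=p_1^*$ or $p_2^*=p_3^*$, which is a \emph{bloc} configuration, and Proposition \ref{wNE} already gives $\alpha\ge 0$ there. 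Only the middle region $p_2^*\le p_1^*\le p_3^*$ needs real work: there $\tilde\alpha(\cdot,p_3^*)$ is convex in $p_2^*$, the interior minimizer $p_2^\dagger(p_3^*)$ is computed explicitly, and the resulting one-variable function $h(p_3^*)=\tilde\alpha(p_2^\dagger,p_3^*)$ is shown to be concave with nonnegative endpoint values, hence $\ge 0$ throughout. So the paper's economy comes from leveraging the bloc result to dispose of two of the three regions for free; your proposal would instead attempt all six branches from scratch.

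A minor technical point: your invocation of Theorem \ref{ntthm} with $v_*=v^*=0$ is not well-posed, since the initial value is $V_0=-t_1=-1\notin[0,0]$ and the hypothesis $\alpha+\beta v_*>v_*$ must be strict. The intended choice is $v_*=-1$, $v^*=0$, which then also requires checking $\alpha-\beta+1>0$ for all opponent profiles, not just $\alpha\ge 0$. (The paper is itself informal on this passage from $\alpha\ge 0$ to $\underline V=0$.)
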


\begin{proof}
	For $n=2$, this follows from Proposition \ref{wNE}, as all strategis are bloc strategies in this case.

	We focus therefore on the case $n=3$.

	\medskip
	{\bf Case I ($p_1^*\leq p_2^*\leq p_3^*$)} 
	\medskip

	Computing for $p_1^*=1/\sqrt{3}\leq p_2^*\leq p_3^*$ that
	$$
	\partial \alpha/\partial p_2^*= -1 + 6p_2^*p_3^* -3p_1^*p_3^*=
	(3p_2^*p_3^*-1) + 3p_3^*(p_2^*-p_1^*)\geq 0,
	$$
	we find for fixed $p_1^*=1/\sqrt{3}$, $p_3^*\geq 1/\sqrt{3}$, that
	$\alpha$ is minimized at the left endpoint $p_2^*=p_1^*=1/\sqrt{3}$,
	or $\alpha(1/\sqrt{3}, 1/\sqrt{3}, p_3^*)$.
	But, we know from the analysis of the bloc strategy case in \cite{CCZ} (here applied to
	player 1-2 bloc) that
	player 3 obtains return $\leq 0$, with equality if and only if $p_3^*=1/\sqrt{3}$.
	Thus, player 1 obtains a return $\geq 0$ in this case, with equality if and only if
	$p_1^*=p_2^*=p_3^*=1/\sqrt{3}$.

	\medskip

	{\bf Case II ($p_2^*\leq p_1^*\leq p_3^*$)} 
	Fixing $p_1^*=1/\sqrt{3}$, we obtain
	$$
	\tilde \alpha(p_2^*,p_3^*):= \alpha(1/\sqrt{3}, p_2^*,p_3^*)=
	2/\sqrt{3} -3p_2^*/2 -2p_3^* + (p_2^*)^3/2 + (p_3^*)^3  + \sqrt{3}p_2^*p_3^*.
	$$

	For fixed $p_3^*$, we find that
	\be\label{deriv}
	\partial \tilde \alpha/\partial p_2^*= 3(p_2^*)^2/2-3/2 + \sqrt{3} p_3^*.
	\ee
	This is nonnegative for 
	\be\label{nonneg}
	2/\sqrt{3} \leq p_3^*\leq 1,
	\ee
	hence is minimized in this case at $p_2^*=0$, or 
		\ba\label{minprob}
		\tilde \alpha(0,p_3^*)&= 2/\sqrt{3}  -p_3^* +  (p_3^*)^3 -p_3 \\
		&\geq  2/\sqrt{3} +   (p_3^*)^3 -2p_3. 
		\ea
	The righthand side of \eqref{minprob} is increasing for $2/\sqrt{3}\leq p_3^*\leq 1$, 
	and at endpoint $p_3^*=2/\sqrt{3}$ gives value 
	$2/\sqrt{3} + p_3^*((p_3^*)^2-2)= 2/\sqrt{3}(1+ 4/3 -2)>0$. 
	Thus, $\tilde \alpha(p_2^*,p_3^*)>0$ in case \eqref{nonneg}.

	In the remaining case,
	\be\label{indef}
	1/\sqrt{3} \leq p_3^*\leq 2/\sqrt{3}, 
	\ee
	noting that $\tilde \alpha(\cdot, p_3^*)$ is convex, we find that $\tilde \alpha$ is minimized
	at the stationary point $\partial \tilde \alpha/\partial p_2^*=0$, or
	\be\label{pdagger}
	p_2^*=p_2^\dagger(p_3^*):= \sqrt{1-2p_3/\sqrt{3}}.
	\ee
	Plugging \eqref{pdagger} into $\tilde \alpha$, we find that the minimum value of $\tilde \alpha$ is
	thus
	\ba\label{h}
	h(p_3^*)&=\tilde \alpha(p_2^\dagger(p_3^*), p_3^*)
	= 2/\sqrt{3} - 2(1-2p_3^*/\sqrt{3})^{1/2} - 2p_3^*\\
	&\quad
	+ (1-2p_3^*/\sqrt{3})^{3/2} + (p_3^*)^3 +\sqrt{3}(1-2 p_3^*/\sqrt{3})^{1/2}p_3^*.
  \ea

	We may readily compute that $p_2^\dagger(1/\sqrt{3})=1/\sqrt{3}$, hence
	\be\label{h0}
	h(1/\sqrt{3})=\alpha(1/\sqrt{3}, 1/\sqrt{3}, 1/\sqrt{3})=0,
	\qquad
	h(2/\sqrt{3})=\alpha(1/\sqrt{3}, 0 , 2/\sqrt{3})=\tilde \alpha(0, 2/\sqrt{3})>0
	\ee
	(the second already computed in case \eqref{nonneg}).
	Next, we compute
	$$
	h'(p_3^*)= (\sqrt{3}p_3^*/2-1)(1-2p_3^*/\sqrt{3})^{-1/2}
	+ (\sqrt{3} + 3/2)(1-2p_3^*/\sqrt{3})^{1/2}
	-2 + 3 (p_3^*)^2,
	$$
	and thus
	$$
	h''(p_3^*)=
	(1/\sqrt{3})
	\Big( C p_3^* - 1 - (p_3^*)^2 - \sqrt{3}\Big) (1-2p_3^*/\sqrt{3})^{-1/2},
	$$
	for $C= \sqrt{3}/2 + 2/\sqrt{3}= \sqrt{3}(1/2 + 2/3)> \sqrt{3} \geq 2p_3^*$.
	Observing for 
	$$
	g(p_3^*):= C p_3^* - 1 - (p_3^*)^2 - \sqrt{3}
	$$
	that
	$ g'(p_3)= C - 2p_3^* >0$, we find that $g$ is 
	maximized on $p_3^*\in [1/\sqrt{3}, \sqrt{3}/2]$ at the right endpoint
	$$
	g(2/\sqrt{3}=  2C /\sqrt{3} - 1 - 4/3 - \sqrt{3}= 14/6 - 6/6 -8/6 - \sqrt{3}<0,
	$$
	we find that $h''<0$ everywhere on $p_3^*\in [1/\sqrt{3}, \sqrt{3}/2]$.
	Thus, $h$ lies above its secant line between endpoints $p_2^*=1/\sqrt{3}$ and
	$p_2^*=2/\sqrt{3}$, and so $h(p_3)\geq 0$ by \eqref{h0}.
	Combining with case \eqref{nonneg}, we have that $\alpha\geq 0$ on the whole
	range $p_3^*\in [1/\sqrt{3},1]$,
	with equality if and only if $p_1^*=p_2^*=p_3^*=1/\sqrt{3}$.

	\medskip

	{\bf Case III ($ p_2^*\leq p_3^* \leq p_1^*$)} 
	\medskip

	Computing $\partial \alpha/\partial p_2^*)=
	-1 + 3(p_2^*)^2/2 + 3(p_3^*)^2/2 \leq -1 + 3(p_1^*)^2=0$, we find for fixed $p_1^*=1/\sqrt{3}$
	and $p_3^*\leq 1/\sqrt{3}$ that $\alpha$ is minimized on $[0, p_3^*]$ at $p_2^*=p_3^*$.
	But, by the results of \cite{CCZ} for the bloc case $\alpha(1/\sqrt{3},p_2^*,p_2^*)$ is $\geq 0$
	with equality if and only if $p_2^*=1/\sqrt{3}$.
	Thus, in this case, again, $\alpha\geq 0$ with equality if and only if $p_1^*=p_2^*=p_3^*=1/\sqrt{3}$.
\end{proof}

The result of Corollary \ref{weenieopt} for continuous Guts with the Weenie rule
is in striking contrast to the result for standard continuous Guts, giving an optimal winning strategy
for player 1 against a coalition of players 2-n.
This is illustrated in Figure \ref{fig_weenie} by a plot of the surface $\alpha(1/\sqrt{3}, p_2^*,p_3^*)$.

  \begin{figure}
		  \includegraphics[scale=0.65]{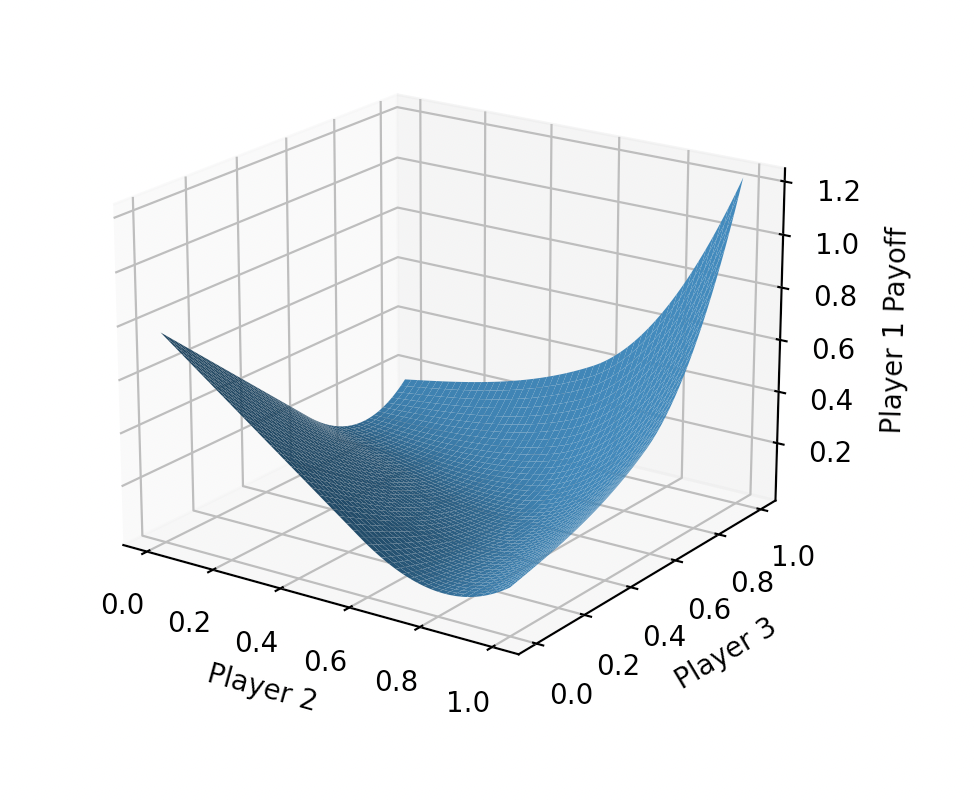}
	  \caption{Plot of $\alpha(1/\sqrt{3}, p_2^*,p_3^*)$ vs. $(p_2^*,p_3^*)$.}
	\label{fig_weenie}
  \end{figure}

\subsection{The case $n\geq 4$}\label{s:w4}
For the $n$-player game with $n\geq 4$, we investigated numerically, by 
checking for discretization $M=1,001$ the optimality property
$$
\min_{p_2^*, \dots, p_n^*}\alpha(1/3^{1/(n-1)},p_2^*, \dots, p_n^*)\geq 0.
$$
For $n=3,4,5$ all yielding (numerical) optimality, indicating optimality of the Nash equlibrium
strategy against general nonbloc strategies.
We conjecture that this property holds for general $n\geq 2$, in striking contrast to the results for
standard continuous Guts.

\subsection{Conclusions}\label{s:conclusions}
We see that $3$-player continuous Guts with the Weenie rule is a rare instance of a multi-player game with
exact minimax solution, or {\it strong Nash equilibrium}, namely, the symmetric equlibrium 
$(p_1^*,p_2^*,p_3^*)=(1/\sqrt{3}, 1/\sqrt{3}, 1/\sqrt{3})$. 
Indeed, it is the still rarer instance of a multi-player game with saddlepoint solution consisting
of pure rather than blended strategies, with pure strategies being of simple threshold type.

Note that this does {\it not} follow by concave-convexity of the payoff function as in the bloc case \cite{CCZ},
but by subtle dynamics of the model. 
Indeed, for symmetric multi-player ($n\geq 3$) games, considered as a $2$-player game between player $1$
and a player $2$-$n$ coalition, the associated payoff function $\alpha$  can never be concave (in $p_1$)-convex 
(in $(p_2,\dots,p_n$), by symmetry, unless $\alpha$ is linear.
The fact that standard Guts does not share this property is further evidence of its subtlety.

\section{Convergence of Fictition play}\label{s:convFP}
We include here a sketch of the proof of convergence of Fictitious Play for zero-sum $2$-player games given by
Jean Robinson \cite{R}, the first woman elected
to the National Academy of Sciences, as the solution of a RAND corporation prize problem posed by G. Brown.
The argument is based on the two ingredients of {\it duality} and {\it Hamiltonian structure}.

Let $P^n$ and $Q^n$ denote the vector of frequencies of strategies played respectively by players one and two,
so that $P^n/n$ and $Q^n/n$ are the associated empirical probability distributions.
Define $BR_1(q)$ to be the {\it best response} of player one against player two strategy $q$, i.e.
$argmax \, \phi(\cdot, q)$.
Likewise, define $BR_2(p)$ to be the {\it best response} of player two against player one strategy $p$, i.e.
$argmax  \,\phi(p, \cdot)$.

Fictitious play (FP) is thus given by the difference system
	\ba\label{FP}
	 P^{n+1}-P^n=  BR_1(Q^n/n) ,\qquad Q^{n+1}-Q^n= BR_2(P^n/n). 
	\ea
	For simplicity, we consider instead Continuous Fictitious Play (cFP)
	\ba\label{cFP}
	\dot P(t)=  BR_1(Q(t)/t) ,\qquad \dot Q(t)= BR_2(P(t)/t). 
	\ea
	for $t\geq 1$ (substituting $t$ for $n$ and $\dot P$ for difference quotients).

\medskip

{\bf Duality.} Both the value $V$ of the game and $\phi(P(t)/t,Q(t)/t)$ lie between 
$\max_p \phi(p,Q(t)/t)$ and $\min_q \phi(P(t)/t,q)$.  Thus, the {\it duality gap} 
$$
g:= \max_p \phi(p,Q(t)/t) -\min_q \phi(P(t)/t,q)
$$
converges to zero as $t\to \infty$ if and only if $\phi(P(t)/t, Q(t)/t)\to V$.

	\medskip

{\bf Hamiltonian structure}: Where $BR_j(\cdot)$ are smooth, 
$\mathcal{G}:= \phi(BR(Q/t),Q)- \phi(P,BR(P/t))= t g$ is
stationary with respect to the BR variables, and otherwise {\it decreasing}.
Thus, applying the chain rule, we find computing variations of $\mathcal{G}$ that
	\be\label{Ham}
	\hbox{\rm 
	$J \dot U {\geq} \nabla \mathcal{G}, \qquad U=\bp P\\Q\ep, 
	\qquad
	J:= \bp 0 & -A\\ A^T& 0\\ \ep $ skew, }
	\ee
	where the inequality $\geq$ in the first equation is understood to hold in the sense
	of subdifferentials, i.e., with respect to perturbations in the arguments of $\mathcal{G}$.
	That is, (cFP) has the form of a {\it Generalized Hamiltonian system}.
	
	\medskip
	{\bf Convergence for $2$-player games.}
Computing, we thus have $ \dot{\mathcal{G}} {\leq} (J \dot U) \cdot \dot U =0,$ and therefore 
$\mathcal{G}\leq \const$ and $g=\mathcal{G}/t \leq \const/t \to 0$, establishing convergence of (cFP).

	\subsection{The $n$-player case}
	For the $n$-player zero-sum case, defining $p_1(t), \dots p_n(t)$ to
	be the empirical probability distributions for players $1$-$n$,
	we point out that one may similarly define a duality gap
	\ba\label{ngap}
g&:= \max_{z}\phi_1(z, p_2(t), \dots, p_n(t)) + \max_{z}\phi_2(p_1(t), z , p_
(t), \dots, p_n(t)) + \dots \\
 &\quad + \max_{z}\phi_n(p_1(t), \dots, p_{n-1}, z ),
	\ea
where $\phi_j$ denotes the payoff function for player $j$, satisfying
$
g\geq \sum_{j=1}^n  \phi_j(p_1(t), \dots, p_n(t)) =0
$
with equality if and only if $(p_1(t), \dots, p_n(t))$ is a Nash equilibrium.
However, a computation for the $n$-player (cFP) analogous to \eqref{Ham} does not appear to yield anything useful,
as there is no cancellation in general for this case.

Recall from the introduction, however, that for continuous $3$-player (standard) Guts, Fictitious play
{\it does} converge, to the unique symmetric Nash equilibrium
$(p_1^*,p_2^*,p_3^*)=(1/\sqrt{2}, 1/\sqrt{2},1/\sqrt{2})$.
We plot the associated evolution of the gap $g(n)$ in Figure \ref{fig_FP}(i)
For a discretization of $501$ mesh points; convergence to Nash equilibrium $p_*=1/\sqrt{2}$
of the $n$th plays $p_j(n)$ for players $j=1,2,3$ (implying convergence of the associated empirical
probability distributions to a delta-function centered at $1/\sqrt{2}$) 
is illustrated in Figure \ref{fig_FP}(ii).
Note the eventual monotone decrease of both the gap $g(n)$ and the rescaled gap $G(n):=ng(n)$, 
similarly as in the $2$-player case.\footnote{Monotone decrease for (cFP) implies eventual monotone decrease
for (FP), as may be seen by the effective step size of $1/(n+1)$ for (FP) considered as a discretization
of (cFP).}
This may give some insight toward a proof of convergence in this case.
Namely, though the straightforward calculation showing decrease of $G$ in the $2$-player case 
breaks down in the general $3$-player case, the specifics of $3$-player Guts, accounted more carefully,
may yield the same result by a more circuitous route.

We see also that $G\sim \const$, suggesting a rate of decay $g(n)\sim 1/n$ similarly as in the $2$-player case.
We conjecture, but have not made a corresponding study,
that Fictitious play converges for $m$-player guts for all $m\geq 2$, with rate $g(n)\sim 1/n$.

  \begin{figure}
        \centering
        \begin{subfigure}[b]{0.45\textwidth}
		%(a) \includegraphics[scale=0.25]{ngnvsgn}
		(a) \includegraphics[scale=0.25]{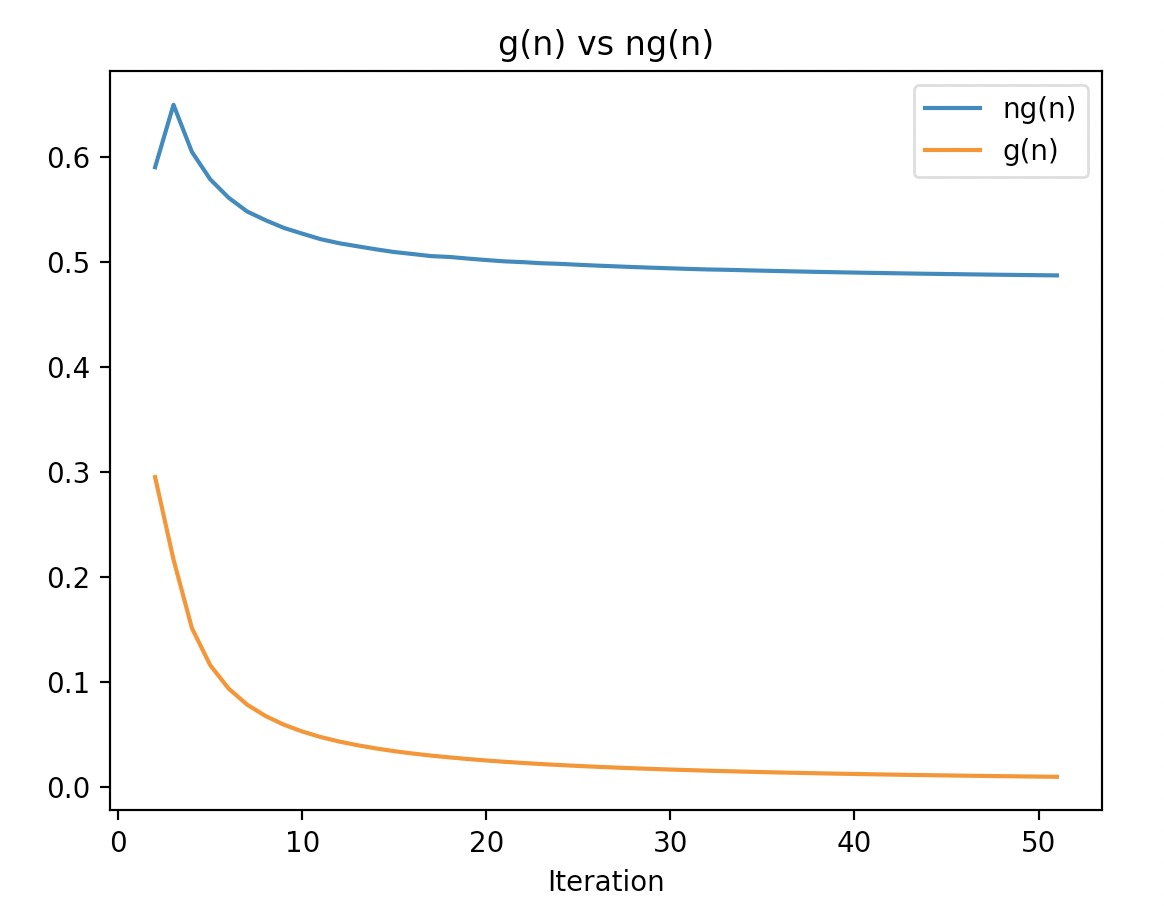}
		 \end{subfigure} 
		 \quad
		 \begin{subfigure}[b]{0.45\textwidth}
			 (b) \includegraphics[scale=0.25]{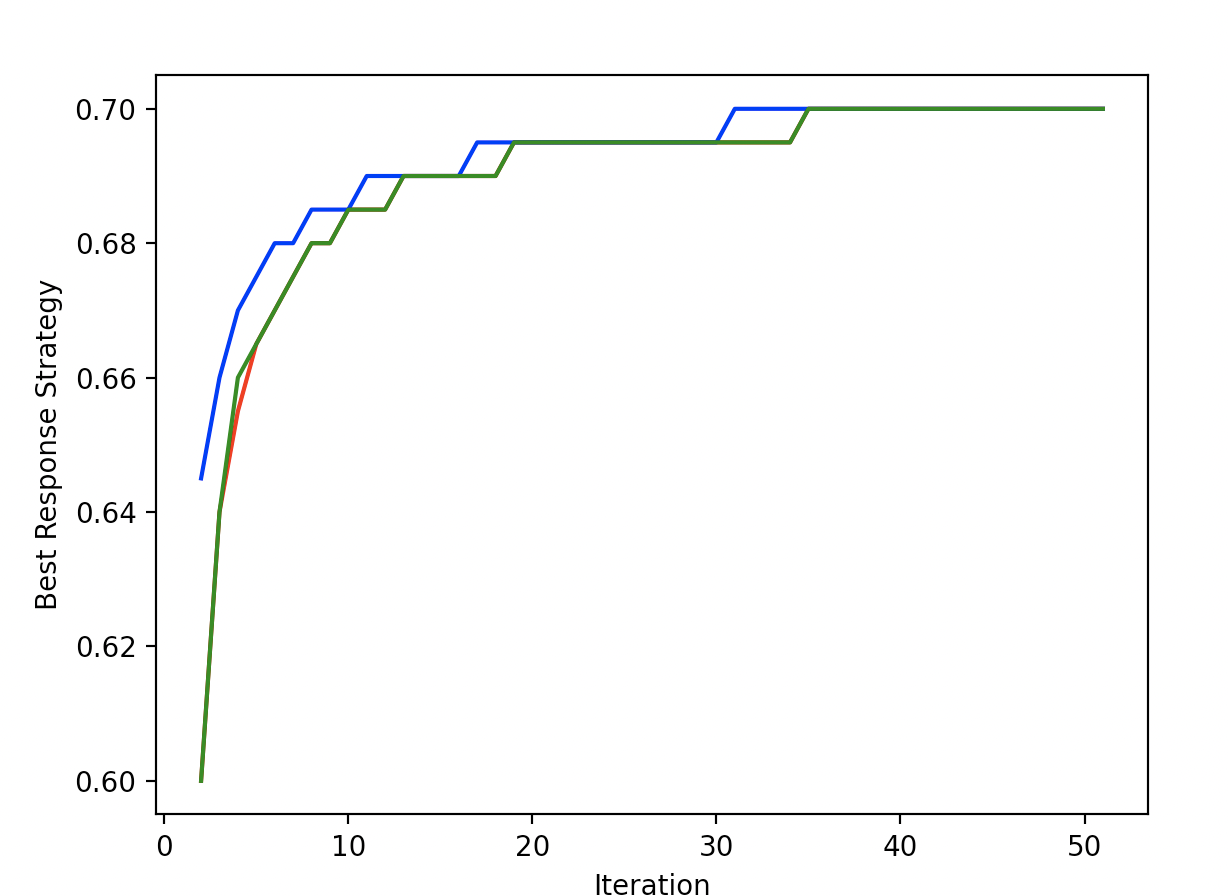}  
		 \end{subfigure}
		 	\caption{
				Graphs plotting for $n$th step of Fictitious play of $3$-player Guts
				a) gap $g(n)$ and rescaled gap $G(n):=ng(n)$ vs. $n$. b) plays $p_1^*(n)$, $p_2^*(n)$, and $p_3^*(n)$ vs. $n$. 
				}
	\label{fig_FP}
  \end{figure}

\subsection{The Jacob game}\label{s:jacob}
Using the 3 player fictitious play algorithm, we also study numerically a simple 3-player symmetric game, 
referred to here as the Jacob Game. 
Each player chooses a number: 1 or 2. If two players choose the same number, and the remaining player chooses 
the other, the players choosing the same number each pay a value equal to their number to the player who 
chose the other number. 
We can define a player's strategy $s_n$ to be the proportion of the time they choose 1. 
We can then characterize any equilibrium as the triplet $(s_1,s_2,s_3)$ of the $3$ player's strategies. 
The Jacob Game has 3 distinct Nash equilibria, of the form $(0,1,1)$, $(2/3,2/3,2/3)$ and 
$(1, 1/3, 1/3)$. 
Since the Jacob Game is symmetric, any permutation of the strategies in any one equilibrium is itself an equilibrium.

This game is particularly interesting, as it features multiple equilibria in which one player wins 
while the others lose in a symmetric game, additionally the only pure and only non-weak equilibrium is of this type. 
The fictitious play algorithm does converge for this example, converging to an equilibrium of the form $(0,1,1)$
(or a permutation thereof), in which player 1 wins $+2$ and players 2 and 3 each lose $-1$:
essentially the opposite of a coalition strategy for players 2-3.

\subsection{Convergence to coalition strategies: the Jacob game II}\label{s:coalition}
One may ask whether or when $n$-player Fictitious Play might converge to a coalition-type strategy instead of
a symmetric Nash equilibrium, in particular why it does not do so for $3$-player Guts.
As to the latter question, we note first that any limit point must be {\it some Nash equilibrium}, even if not
a symmetric one.  The winning bloc strategy for players $2$-$3$ on the other hand is not a Nash equilibrium,
since examination shows that one of players $2$, $3$ can always benefit by departing from the bloc strategy.
Second, we note that limiting strategies must necessarily be of {\it asynchronous} type, meaning that
all players independently choose randomly from their individual blended strategies.
As the winning strategy found above is, rather, of {\it synchronous} type, meaning that player 2's and player 3's
strategies are not chosen independently, it is disqualified for this second reason as well.

%Based on these considerations, it is an interesting 
These considerations suggest the question whether there
exists a symmetric $3$-player game with an asymmetric Nash equilibrium consisting of a winning coalition strategy
for two players vs. the third, and if so what is its behavior under Fictitious Play.  
An interesting example in this regard is a symmetric, zero-sum, 3-player game
that we refer to as the Jacob game II. In this game, each player chooses from strategies $(1,2,3)$, with 
payoff schedule 
\be\label{mjacob}
1: \, \bp 0&0&0\\0&0&-2\\0&-2&0\ep; \quad
2 \, \bp 0&0&1\\0&0&-4\\1&-4& 16\ep; \quad
3: \, \bp 0&1&0\\1&8&-8\\0&-8&0\ep
\ee
displayed in the form $s_j: A_j$, where the payoff $\alpha(s_1,s_2,s_3)$ 
to player 1 for pure strategy choices $s_1, s_2, s_3=1,2,3$ is the $s_2$-$s_3$ entry of the matrix $A_j$.
Payoffs to players 2 and 3 are then determined by symmetry, as is possible in a consistent way thanks
to symmetry of the matrices $A_j$.
One may check by hand the zero sum property
$ \alpha(s_1,s_2,s_3)+ \alpha(s_2,s_1,s_3)+ \alpha(s_3,s_1,s_2)=0 $
for symmetric games.

This game possesses a pure nonstrict symmetric Nash equilibrium in which each player chooses strategy 1,
and also a blended nonstrict symmetric Nash equilibria in which each player chooses strategy $(s_1,s_2,s_3)$
with probabilities $(0,2/3,1/3)$.
It possesses also a pure asymmetric ``coalition-type'' strong Nash equilibria in which each player $j$ chooses 
$s_j=j$, with player 1 losing $-2$ and players 2 and 3 each gaining $+1$, along with its various permutations:
six in total. There may be other equilibria we have not found.
In experiments using Fictitious Play with randomly generated initial strategy ensembles for players 1-3,
we found that the process {\it always converged to a coalition-type equilibrium} of one type or the other,
in striking contrast to the situation for Guts poker.
(The standard Fictitious play protocol of choosing best response randomly in case of a tie explains the absence
of ``intermediate behavior'' of separatrix type.)

One may ask also whether there exist symmetric games with winning coalitions that are 
non-strong Nash equilibria, and what would be the result for Fictitious Play.  The expanded modification 
\ba\label{mjacobIII}
&1: \, \bp 0&0&0&2\\0&0&-2&2\\0&-2&0&2\\ 2 & 2 & 2 & 0\ep; \quad
2: \, \bp 0&0&1&2\\0&0&-8&2\\1&-8& 32&2\\ 2 & 2 & 2 & -4 \ep; \\
&3: \, \bp 0&1&0&2\\1&16  &-16 &2  \\0&-16  &0 &2\\ 2&2&2&2\ep ; \quad
4: \, \bp -4&-4&-4&0\\ -4&-4&-4&2\\ -4&-4&-4&-1\\ 0& 2&-1& 0 \ep
\ea
of \eqref{mjacob} features a winning coalition-type Nash equilibrium corresponding for players 1-3 to the
one for \eqref{mjacob}; however, it is no longer a strong equilibrium, due to the possibility of ``traitorous''
play in which one partner in the 2-player coalition colludes with the remaining player to improve their joint returns.
Experiment shows that Fictitious Play converges for this game also to this coalition-type equilibrium, 
or a symmetric permutation thereof.

The ``mega''-version
\ba\label{megajacob}
&1: \, \bp 0&0&0&2&2&2\\0&0&-2&2&2&2\\0&-2&0&2&2&2&\\ 
 2&2&2&-4&-4&-4 \\ 2&2&2&-4&-4&-4 \\ 2&2&2&-4&-4&-4 \ep; \quad
2: \, \bp 0&0&1&2&2&2\\0&0&-8&2&2&2\\1&-8& 32&2&2&2\\ 
2&2 & 2 &  -4&-4&-4 \\ 2&2 & 2 &  -4&-4&-4\\  2&2 & 2 &  -4&-4&-4 \ep; \\
&3: \, \bp 0&1&0&2&2&2\\1&16  &-16 &2&2&2  \\0&-16  &0 &2&2&2\\
2&2 & 2 &  -4&-4&-4 \\ 2&2 & 2 &  -4&-4&-4\\  2&2 & 2 &  -4&-4&-4 \ep ; \quad
4: \, \bp -4&-4&-4&2&2&2\\ -4&-4&-4&2&2&2\\ -4&-4&-4&2&2&2\\
2&2 & 2 &0&0&0   \\2&2 & 2 &0&0&-2 \\ 2&2 & 2 &0&-2&0 \ep; \\
&5: \, \bp -4&-4&-4&2&2&2\\ -4&-4&-4&2&2&2\\ -4&-4&-4&2&2&2\\
2&2 & 2 &0&0&1   \\2&2 & 2 &0&0&-8 \\ 2&2 & 2 &1&-8& 32 \ep; \quad
6: \, \bp -4&-4&-4&2&2&2\\ -4&-4&-4&2&2&2\\ -4&-4&-4&2&2&2\\
2&2 & 2 &0&1&0   \\2&2 & 2 &1&16&-16 \\ 2&2 & 2 &0&-16& 0 \ep; \quad
\ea
of \eqref{mjacobIII}, meanwhile, allows improvement not only through traitorous departure from coalition, but
also through jointly favorable modification of the coalition strategies.
Thus, though the described coalitions are optimal among synchronous strategies, they cannot be optimal 
with respect to ``synchronous'' coalition strategies where players strategies may (as for continuous guts)
be linked.  
Running fictitious play for this game yields convergence
to a coalition of either a player playing 2 and a player playing 3 against a player playing 1, or a player playing 5 and a player playing 6 against a player playing 4. This gives each member of the coalition +1, however if the players could play a synchronous strategy in which they synchronously decide to either play [2,3] or [5,6], they 
would on average get +1.5 each against the opposing player's best response. 
Thus, it appears that fictitious play can converge to an asynchronous coalition strategy, 
even when it is not optimal among the larger class of synchronous coalition strategies.
%
%TODO, opted NO:
%Also, a player not in the coalition and one player in the coalition could break the asynchronous or the synchronous coalition for an advantage (though this doesn't happen in fictitious play), so the equilibria are not strong 
%K: irrelevant I think.  the synchronous guys are never Nash eq.... which by def. are asynchronous...

More generally, could there be examples
in which approximate coalitions spontaneously form and dissolve?  Or would there occur some more chaotic
possibility? This question seems worthy of further investigation.

\section{Sychronous vs. asynchronous coalition: two examples}\label{s:asynch}
We conclude with two examples demonstrating the possibility of a gap between the values forceable
by synchronous and by asynchronous coalitions, as mentioned in the introduction.

\subsection{Odd man in}\label{s:in}
Consider the following symmetric, zero-sum 3-player game.
Each player chooses a value 1, 2, or 3. If all choices are the same,
or all are different, there is no payoff. If two players choose a common number, however, and the third player
a different one, then the first two each pay a value of 1 to the third, i.e., the first two receive payoff -1
and the third +2. Clearly, the strategy distribution $(1/3, 1/3, 1/3)$ for player 1 gives average 
return of $+2/3$ if the other two players play the same number, and $-2/3$ if they play different numbers.
Thus, player 1 can force $\geq -2/3$.  On the other hand,  if players 2-3 choose with equal probability
between pairs of choices $(1,2)$, $(1,3)$, and $(2,3)$, then the average payoff to player 1 is independent
of player 1's choice of strategy, and equal to $(2/3)\times (-1) + (1/3)\times (0)=-2/3$.  Thus, players
2-3 can force a return of $\leq -2/3$ to player 1 by synchronous coalition play, and the value of the 
player 1 vs. players 2-3 game is $-2/3$. 

As described earlier, this is the maximum value that player 1 can force against 
either synchronous or asynchronous play; however, the value forceable by synchronous play of players 2-3
may in principle be larger. In fact, it is larger, as we now show.  

Let $y:=(y_1,y_2,y_2)$ and $z:=(z_1,z_2,z_3)$
denote probability distributions describing mixed strategies for players 2 and 3.
Then, it is readily computed that the payoff to player 1 is
\ba\label{aspay}
&\hbox{ \rm $\Psi_1(y,z):=2y\cdot v - (y_1+z_1)$ for player 1 choice 1,}\\
&\hbox{ \rm $\Psi_2(y,z):=2y\cdot v - (y_2+z_2)$ for player 1 choice 2,}\\
&\hbox{ \rm $\Psi_3(y,z):=2y\cdot v - (y_3+z_3)$ for player 1 choice 3.}\\
\ea
The value $\Psi(y,z):= \max_j \Psi_j(y,z)$ is thus the minimum value forceable by choice $(y,z)$,
and
$$
\overline{V}=\min_{y,z} \Psi(y,z)
$$
is the minimum value forceable by players 2-3 via asynchronous play, and by continuity of $\Psi$ is
achieved for some feasible pair of strategies $(y_*,z_*)$.

Noting that the average of $\Psi_j$ is 
$$
2 y\cdot z - (1/3)\sum_j (y_j+z_j)= 2 y\cdot z - (2/3)\geq -2/3,
$$
we have that $\Psi(y,z)\geq -2/3$, with equality if and only if simultaneously $y\cdot z=0$
and $(y_j+z_j)=2/3$ for all $j$. But, these together imply that one of each pair $y_j, z_j$ has
value zero and the other value $2/3$, which is impossible to reconcile with $\sum_j y_j=\sum_j z_j=1$.
Thus, evaluating at $(y,z)=(y_*,z_*)$, we obtain $\overline{V}= \Psi(y_*,z_*)>-2/3$, verifying
that there is indeed a gap between this value and the value $-2/3$ forceable by synchronous coalition play.
Indeed, the optimum asynchronous strategy 
%appears 
can be shown to be $y_*=(1,0,0)$, $z_*=(0, 1/2, 1/2)$, forcing an expected
payoff to player 1 of $\leq -1/2$: thus, a gap of $-1/2-(-2/3)=1/6$.
%TODO: verify this!

To complete the picture, recall from \cite[Prop. A.2]{CCZ} that for a symmetric zero-sum game, the
symmetric Nash equlibrium is equal to the optimum strategy for player 1 in the modified ``bloc strategy''
game of player 1 vs. players 2-3, where the latter are required to play the same strategy,
in this case the game with payoff 
\be\label{blockpay}
\Psi(y,y)=\min_j 2(|y|^2 - y_j).
\ee
By Jensen's inequality, $|y|^2/3\geq [(\sum_j y_j)/3]^2= 1/9$, or $|y|^2\geq 1/3$, while $\min_j y_j
\leq (\sum_j y_j)/3$, both with equality if and only if $y_j\equiv 1/3$.
Thus, $\Psi(y,y)\geq 0$ with equality only if $y=(1/3,1/3, 1/3)$, and so $x,y,z=(1/3,1/3,1/3)$ is
the unique strict symmetric Nash equlibrium, returning value $0$, as it must for a symmetric zero-sum game.

\subsection{Odd man out}\label{s:in}
Next, consider the same game, but with payoff function multiplied by $-1$: that is, the ``reverse'' game,
in which the odd player is penalized instead of rewarded.
Again, the symmetric Nash equilibrium
is $x=y=z=(1/3,1/3,1/3)$, returning payoff zero to all players.  
An optimum synchronized strategy is a blend of pure strategy pairs 1-1, 2-2, 3-3, each chosen with probability 
$1/3$, yielding again value $(1/3)(-2)=-2/3$

However, the optimum value forceable by asynchronous coalition by players 2-3 is now
$
\tilde{V}= \min_{y,z}\tilde \Psi(y,z)$, where
$$
\tilde \Psi(y,z): =\min -\psi_j(y,z)= -2 y\cdot z + \max_j (y_j+z_j),
$$
and this, by Cauchy-Schwarz and Jensen's inequalities $y\cdot z\leq |y||z|$ 
and $|y|\leq (\sum_j y_j)/\sqrt{3}=1/\sqrt{3}$, together with the fact that maximum is greater than or equal
to average, is greater than or equal to $-2/3 + 2/3=0$, with equality if and only if
$y=z=(1/3,1/3,1/3)$, the Nash equilibrium values.
Thus, in the reverse game as in the forward one, player one can force at most $-2/3$, the value forceable
by players 2-3 by synchronized coalition play; hence, there is again a gap. 
However, players 2-3 can force at most $0$ by ansynchronous coalition play; that is, they cannot guarantee
by such strategy a joint profit for themselves.

\subsection{Discussion}\label{s:asdisc}
We note, as described in the introduction, that a similar situation appears to hold for (standard) 
3-player continuous guts. That is, the optimum synchronous coalition strategy appears to be strict up 
to symmetry and genuinely mixed-type in the sense that it has no pure-strategy representative,
so that the optimum asynchronous strategy must return a strictly larger maximum expected value to player 1
and there is a gap between values forceable by players 2-3 via synchronous vs. asynchronous coalition play.
A very interesting open question is whether for Guts, as in the forward example, 
there is also a gap between the value forceable
by asynchronous coalition and the symmetric Nash equlibrium return, so that 
players 2-3 can guarantee by asynchronous play a joint profit to themselves,
or whether as in the reverse example they can guarantee at most a zero outcome by asynchronous play.
We conjecture that for Guts, the latter holds;
to determine whether or when this is true for general games is an 
extremely interesting problem worthy of further investigation.

For games with a gap between values forceable by synchronous vs. asynchronous coalition values, 
as seem from the above discussion to be somewhat typical,
it is a further very interesting philosophical question what are the implications 
for player 1.  For, following the classical worst-case scenario analysis, player 1 cannot guarantee better 
than the value $V$ against synchronous play.  However, suppose that players 2 are prevented from communicating 
or even knowing which round is being played, so that their play is truly asynchronous. 
Then, they cannot in {\it their} worst-case scenario force less than the value $\overline V>V$ 
obtainable by asynchronous play.  But, is there any way for player 1 to take advantage of this gap, 
other than negotiation outside the game?  This seems to be a subtle side-issue for symmetric zero sum games,
apart from the standard one of determining value/optimal play when there exist successful asynchronous coalition 
strategies.

\end{document}